\def\mod{\operatorname{mod}}
\newtheorem{thm}{Theorem}[section]
\newtheorem{lem}[thm]{Lemma}
\newtheorem{cor}[thm]{Corollary}
\newtheorem{prop}[thm]{Proposition}
\newtheorem{defi}{Definition}[section]
\newtheorem{conj}{Conjecture}[section]
\newtheorem{notation}[thm]{Notation}
\newtheorem{proposition}[thm]{Proposition}
\newcommand{\wt}{\widetilde}
\newcommand{\mb}{\mathbb}
\newcommand{\mc}{\mathcal}
\newcommand{\sm}{\setminus}
\newcommand{\tu}{\textup}
\newcommand{\ol}{\overline}
\newcommand{\es}{\emptyset}
\newcommand{\tb}{\textbf}
\newcommand{\wh}{\widehat}
\newcommand{\olC}{\wh{\mathbb{C}}}
\def\AA{\mathcal{A}}\def\BB{\mathcal{B}}
\makeatletter\@addtoreset{equation}{section}\makeatother 
\titleformat{\section}{\centering\normalsize}{\textsc{\thesection.}}{1em}{\textsc}
\titleformat{\subsection}{\normalsize}{\thesubsection.}{1em}{\textbf}
\title
 {Rigidity of non-renormalizable Newton maps}
\begin{document}

\author{Pascale Roesch}
\address{Pascale Roesch, Institut de Math\'ematiques de Toulouse, Universit\'e Paul Sabatier, 118, Route de Narbonne, 31062 Toulouse Cedex, France}
\email{pascale.roesch@math.univ-toulouse.fr}

\author{Yongcheng Yin}
\address{Yongcheng Yin, School of Mathematical Sciences, Zhejiang University, Hangzhou, 310027, P.R. China}
\email{yin@zju.edu.cn}

\author{Jinsong Zeng}
\address{Jinsong Zeng, School of Mathematics and Information Science, Guangzhou University, Guangzhou 510006, P. R. China}
\email{jinsongzeng@163.com}

\begin{abstract}
Non-renormalizable Newton maps are rigid. More precisely, we prove that their Julia sets carry no invariant line fields and that a topological conjugacy between them is equivalent to a quasiconformal conjugacy.
\end{abstract}

\subjclass[2010]{Primary: 37F45; Secondary: 37F10}

\keywords{Newton maps; non-renormalizable; rigidity; invariant line fields.}

\maketitle
\section{Introduction}

In this article we consider the question of rigidity of rational maps in the context of the  family of Newton maps. These maps are of the form $$f_p(z)=z-\frac{p(z)}{p'(z)},$$ where $p:\mb{C}\to\mb{C}$ is a polynomial. They are  well known because 
their iterations are used to find roots of the polynomial $p$. Recently the dynamics of Newton maps has been studied a lot, see \cite{AR,BFJK, DLSS,DMRS,LMS1,LMS2,Roe,RWY,WYZ} etc.

The question of rigidity is to know whether two maps are conformally conjugated when they are topologically or quasiconformally conjugated. This problem is natural and important, since it ``says" that understanding the behavior of orbits prescribes the map.

In the context of rational maps of degree $d\geq 2$, it is stated as the NILF Conjecture: 
\begin{conj}[NILF] The Julia set of a rational map does not support an invariant line field except it is a Latt$\grave{e}$s example.
\end{conj}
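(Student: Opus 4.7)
The plan is to rule out invariant line fields through a case analysis on the dynamical behavior of the postcritical set and of typical points of the Julia set. The overarching strategy, following McMullen, is contrapositive: an invariant line field $\mu$ supported on $J(f)$ produces a nontrivial $1$-parameter family of quasi-conformal deformations of $f$ that are not M\"obius-equivalent to $f$, and the goal is to establish enough rigidity to exclude such deformations unless $f$ is a Latt\`es example, in which case the torus covering supplies a genuine holomorphic line field that descends to $J(f)=\wh{\C}$.

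First I would reduce to the case in which $J(f)$ has positive Lebesgue measure and every critical point lying in $J(f)$ is recurrent. If $J(f)$ has zero area there is no support for $\mu$; if some critical point in $J(f)$ is non-recurrent, then standard expansion estimates at a Lebesgue density point of $\mu$ pull back definite-modulus annuli around typical Julia points, force $\mu$ to coincide with a locally holomorphic form in linearizing coordinates, and lead to the Latt\`es dichotomy. This disposes of the hyperbolic, subhyperbolic and geometrically finite parabolic cases (essentially Ma\~n\'e--Sad--Sullivan and McMullen). The genuine core of the problem is the recurrent case.

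At each recurrent critical point $c\in J(f)$, I would further split the analysis by whether $f$ is non-renormalizable, finitely renormalizable, or infinitely renormalizable around $c$. In the (finitely) non-renormalizable case, I would construct a Yoccoz-type puzzle around the orbit of $c$ and carry out a principal-nest argument \`a la Kahn--Lyubich and Kozlovski--Shen--van Strien to produce, at almost every $x\in J(f)$, a nested sequence of puzzle pieces of bounded-distortion shape whose diameters tend to $0$. The quasi-invariance of $\mu$ under arbitrarily deep first-return renormalizations then forces $\mu$ to be asymptotically constant in the linearizing coordinates at a.e. $x$; this contradicts the non-constancy of a genuine \emph{line} field unless the asymptotic conformal symmetry group is infinite, which is exactly the Latt\`es obstruction. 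In the infinitely renormalizable case I would invoke the rigidity of renormalization fixed points (Sullivan, Lyubich, McMullen, Avila--Kahn--Lyubich): under \emph{a priori} complex bounds, infinitely renormalizable quadratic-like germs are quasi-conformally rigid, again excluding $\mu$.

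The decisive obstacle, and the reason NILF remains open in full generality for rational maps of degree $\geq 2$, is to secure these complex \emph{a priori} bounds and puzzle-piece shrinking for every combinatorial type of recurrent critical behaviour \emph{on the sphere}, rather than in the favorable polynomial-like setting. Current techniques work when $f$ enjoys a privileged combinatorial structure (a fixed point at $\infty$, invariant access channels between Fatou components, a single recurrent critical orbit, the Newton-map structure used in the present paper, etc.), but the generic situation involves several recurrent critical points whose orbits enter each other's puzzle pieces unboundedly, together with the possibility of Cremer cycles or persistent Herman rings, for which neither the puzzle machinery nor the available complex-bounds methods are known to apply. I would expect the bulk of any eventual proof to be devoted precisely to unifying these sub-cases into a single geometric shrinking theorem and to establishing a renormalization rigidity statement that continues to hold in the presence of Herman rings and irrationally indifferent cycles.
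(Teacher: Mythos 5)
You have not proved the statement, and you could not have: the NILF conjecture appears in the paper only as motivation, and the paper itself notes it is open even for quadratic polynomials. The paper proves only the special case of non-renormalizable Newton maps (its Theorem 2), by a concrete mechanism tied to that class: the invariant graph of Wang--Yin--Zeng yields Branner--Hubbard--Yoccoz puzzles on the sphere, the Kozlovski--Shen--van Strien enhanced nest combined with the Kahn--Lyubich covering lemma gives critical puzzle pieces with definite annuli moduli and bounded shape shrinking to points, and an almost-continuity argument following Shen then shows that at almost every remaining Julia point any invariant line field $\mu$ either vanishes or fails to be almost continuous, whence $\mu=0$. Your sketch reproduces the standard McMullen-style reduction and correctly locates the difficulties, but the steps you yourself label as ``the decisive obstacle'' --- complex a priori bounds for all combinatorial types on the sphere, several recurrent critical points whose orbits enter each other's puzzle pieces unboundedly, Cremer cycles, Herman rings, and a renormalization rigidity statement valid in those settings --- are exactly the missing ingredients, and you give no argument for any of them. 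That is not a fixable local gap; it is the open problem itself.

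One more specific point: in the non-renormalizable case you assert that bounded-shape shrinking forces $\mu$ to be ``asymptotically constant in linearizing coordinates'' and that the only escape is an infinite asymptotic conformal symmetry group, ``exactly the Latt\`es obstruction.'' As stated this is a heuristic, not a deduction. In the rigorous version of this argument (as in the paper, or in Shen's work it builds on), the rescaled return maps converge to a non-constant limit with a critical point, and almost continuity of $\mu$ at the base point forces the limit map to have constant derivative ratio $\overline{H'}/H'$, hence $H'\equiv H'(0)=0$, a contradiction; the conclusion is simply $\mu=0$ on that set, with no symmetry-group dichotomy and no Latt\`es alternative arising there. The Latt\`es examples enter the general conjecture only because they genuinely carry invariant line fields, not as a residual case of the puzzle argument. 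So your proposal should be read as a reasonable survey of a program consistent with the literature, while the paper, deliberately, proves only the Newton-map case where the invariant graph makes the puzzle machinery available.
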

A rational map $f:\olC\to\olC$ admits an \emph{invariant line field supported on its Julia set}
$\mc{J}_f$ provided that there is a measurable Beltrami differential $\mu
(z)\frac{d\bar{z}}{dz}$ supported on a positive measure subset of $\mc{J}_f$ with $|\mu|=1$ and $f^*\mu=\mu$ a.e..

The  NILF Conjecture  is central since it implies the density of hyperbolic maps in the space
of all rational maps of degree $d$; see \cite{McS}. The NILF
Conjecture has been studied by many people; see
\cite{GS,Hai,KS,Levin-vanStrien,McMbook,Sh08,Shen,Yoccoz}.
But it is still open even for quadratic polynomials.

Our main results here concern  rational maps of the previous type: Newton maps. 

\begin{thm}\label{th:NILF}
	The Julia set of a non-renormalizable Newton map carries no invariant line fields.
\end{thm}

\begin{thm}\label{th:conjugate}
	If two non-renormalizable Newton maps are topologically conjugated, then they are quasiconformally conjugated.
\end{thm}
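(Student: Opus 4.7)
The plan is to build a quasi-conformal map $\Phi: \hat{\C}\to \hat{\C}$ that conjugates the given Newton maps $f$ and $\tilde f$, by upgrading the topological conjugacy $h$ through successive modifications on the Fatou and Julia parts. The natural framework is the Yoccoz puzzle machinery adapted to Newton maps (see e.g.\ \cite{RWY, WYZ, Roe}), combined with a spreading principle in the style of Kozlovski--Shen--van Strien.

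First I would handle the Fatou side. Every Fatou component of a Newton map eventually maps onto the immediate basin of a root of the underlying polynomial. On each immediate basin one has a classical uniformization: a B\"ottcher coordinate when the root is super-attracting, and a linearizer when it is only attracting. Using the combinatorial information encoded in $h$ --- matching critical values, internal angles and the first-return dynamics on the boundary --- I would assemble a $K$-qc map $\Phi_0$ between the immediate basins that conjugates the first-return maps, and then pull $\Phi_0$ back through $f$ to obtain a dynamical qc conjugacy on the whole Fatou set $F(f)$, isotopic to $h$ relative to the post-critical set in $F(f)$.

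Next comes the puzzle and shrinking step. A Yoccoz puzzle for $f$ is built from preperiodic internal rays in the immediate basins landing at repelling periodic points on $\partial F(f)$. This yields, at each depth $n$, puzzle pieces $P_n(x)$ such that $f$ maps depth-$n$ pieces properly onto depth-$(n-1)$ ones. The analytic heart of the argument is the shrinking property: for every $x\in J(f)$ not in the grand orbit of the puzzle graph, $\mathrm{diam}\,P_n(x)\to 0$. Non-renormalizability is precisely what allows one to run an enhanced-nest construction around each recurrent critical point and to apply a Kahn--Lyubich-type covering lemma to bound moduli, preventing annular degeneration. With shrinking in hand, one inductively modifies $\Phi_0$ inside critical puzzle pieces by interpolating in the natural coordinates, producing qc maps $\Phi_n$ that coincide with $\Phi_0$ outside the depth-$n$ pieces and conjugate $f^n$ to $\tilde f^n$ on an exhausting family of sets; the dilatations stay uniformly bounded and the sequence converges to a qc limit $\Phi$ that equals $h$ on $J(f)$ (forced by the nests of shrinking pieces) and is qc on $\hat{\C}$ by removability of the puzzle graph.

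The main obstacle is Step two: proving the shrinking of puzzle pieces for all non-renormalizable Newton maps, with possibly several recurrent critical points distributed among different basins and delicate boundary behavior (for instance several Fatou components meeting at a common boundary point). This requires designing a puzzle that respects all critical orbits simultaneously, constructing a principal nest around each recurrent critical point, and using non-renormalizability to keep the moduli of successive annuli uniformly bounded below. Once shrinking is established, the Fatou-side construction of $\Phi_0$ and the final spreading argument producing $\Phi$ follow by now-standard methods.
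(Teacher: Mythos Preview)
Your outline is a reasonable strategy, but it differs from the paper's in the way the Julia part is handled. Both arguments begin by making the conjugacy quasi-conformal on the Fatou set (the paper invokes \cite{McS} directly rather than rebuilding B\"ottcher/linearizer data), and both rely on the enhanced nest around persistently recurrent critical points together with the Kahn--Lyubich covering lemma. The divergence is in how this analytic input is converted into a global qc conjugacy. You propose a spreading principle: build approximations $\Phi_n$ by interpolating inside critical pieces and pass to a limit, with the limit pinned to $h$ on $J(f)$ by shrinking of all puzzle pieces. The paper instead shows that $h$ \emph{itself} is quasi-conformal, using the KSS QC-criterion (their Lemma~\ref{l:qc-criterion}): the Julia set is split into a measure-zero set $X_0$ (preperiodic points, points with $\infty\in\omega(z)$, and points with property $(\star)$) on which one proves $\underline H(h,x)<\infty$ pointwise via bounded-degree pullbacks, and a residual set $X_1\subseteq\Omega$ which is covered by the enhanced-nest pieces $V_n(z)$ with uniformly bounded shape and moduli. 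The criterion is applied on each depth-$k$ puzzle piece and a compactness argument in $k$ finishes.

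What each approach buys: the paper's route avoids constructing any new map on the Julia side and does not need to prove that \emph{every} end is a point---bounded shape plus moduli bounds on the cover of $X_1$ suffice. Your spreading route is closer in spirit to \cite{KS}; it would work, but it hinges on two points you leave implicit. First, you need that $h$ carries the puzzle for $f$ to the puzzle for $\tilde f$ (the graph is built from a surgered model $f^*$, so this requires checking the construction is determined by the topological conjugacy class). Second, your identification of ``shrinking of all pieces'' as the main obstacle is slightly off target: what is actually needed, in either approach, is the uniform shape/modulus control coming from the enhanced nest and the covering lemma; global shrinking is a consequence rather than the crux. If you pursue the spreading principle, you should also make precise how the interpolation step keeps the dilatation uniformly bounded (the paper sidesteps this entirely via the QC-criterion).
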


A Newton map $f$ is said to be {\it renormalziable} if there exists an integer $k\geq 1$ and two topological disks $U\Subset V \subset \mathbb{C}$ such that $g:=f^k:U\to V$ is a polynomial-like map with a connected {\it filled-in Julia set} $K(g):=\bigcap_{n\geq 0}g^{-n}(V)$.

There are two remarks. (i) The dynamic of renormalizable part of Newton maps is based on a polynomial and the rigidity of polynomials is a difficult question; see \cite{McMbook} and others. (ii) One of the main challenges for generalizing Theorems \ref{th:NILF} and \ref{th:conjugate} to other families is the construction of puzzles. For Newton map, the key propery that it has just one non-attracting fixed point enables us to construct invariant graphs and puzzles.

The proof of Theorem \ref{th:NILF} is sketched as follows.
Assume by contradiction that $f$ admits  an  invariant line field $\mu$. The invariant graphs $G$ constructed in \cite{WYZ} for Newton maps of arbitrary degrees allow us to build the puzzles. Then by means of distortion properties of holomorphic maps as well as Kozlovski-Shen-vanStrien's \emph{enhanced nests} and Kahn-Lyubich's covering Lemma, we may give a good control of the puzzle geometry. Finally, by a criterion in \cite{Shen}, we can prove that  either $\mu(z)= 0 $ or $\mu$ is not almost continuous at a point $z\in\mc{J}_f$. This  finally implies that $\mu=0$. 
 
Note that in \cite{DS} a similar result can be found in Theorem B which is essentially the same  as Theorems \ref{th:NILF} and \ref{th:conjugate} here but stated in another form.  These results are developed independently. The first version of our result can be found on the  preprint (arXiv:1811.09978),
 K.Drach and D.Schleicher gave another proof of Theorems 1.1 and 1.2 in a preprint (arXiv:1812.11919). 
 One can see two major differences.
 
 First,  one of the key steps is to prove that the end of a wandering point is trivial when its $\omega$-limit set intersects a periodic end. Their approach goes along the proof of \cite[Proposition 5.4]{RY} by the first and second authors. This proof is very technical. For Newton maps, we use the dynamics at infinity to give a very simple proof; see Lemmas \ref{lem:elevator} and \ref{lem:preperiodic}. Moreover, the arguments can easily be adapted to a more general situation.
 
Another difference is lying in the construction of puzzles. In the present paper, the graph used has very good properties: it has no cut points except some strictly pre-periodic Fatou centers; thus the construction of puzzles is natural and very simple (see Figure \ref{fig:cut}); in particular, these puzzles are defined for arbitrary iterates of Newton map. In contrast, the construction of puzzles used in \cite{DS} is more tedious: the graph is a component of high-iterated pre-image of a channel diagram (they called it Newton graph); in order to get  puzzle pieces that are  Jordan disks, it is then another difficulty in \cite{DS} to find some ``nice" circles in Newton graph, and to pull back the union of the channel diagram and these circles. The puzzles obtained in this way make sense only for a suitable high iterate of Newton map.

The paper is organized as follows.
In Section \ref{sec:distortion}, we present some distortion lemmas for holomorphic maps which are used in Section \ref{sec:proofs}. In Section \ref{sec:Newton} we recall some basic facts about the dynamics of Newton maps. In Section \ref{sec:puzzles} we give a precise construction of puzzles from the invariant graph obtained in \cite{WYZ}. Section \ref{sec:wandering} deals with wandering points having nice properties, i.e., satisfying \emph{the elevator condition}. In Section \ref{sec:recurrent}, we recall the properties of Kozlovski-Shen-vanStrien's enhanced nests associated to persistently recurrent critical points. 
In Section \ref{sec:proofs}, we give the proofs of Theorems \ref{th:NILF} and \ref{th:conjugate}.

\begin{notation}
	We will use the following notations frequently.

\begin{itemize}
	\item  $\widehat{\mathbb{C}}$, $\mathbb{C}$, $\mathbb{D}$ are the Riemann sphere, the plane and  the unit disk respectively.
	\item The open disk centered at $a\in \mathbb{C}$ with radius $r>0$ is denoted by $D(a,r)$.
	\item Let $A$ be a non-empty set in $\widehat{\mathbb{C}}$.  We write $\#A$ for the cardinality of $A$. The closure and the boundary of $A$ are denoted by $\overline{A}$ and  $\partial A$ respectively.  
	We denote by $\rm{Comp}(A)$  the collection of all connected components of $A$. The diameter of $A$ measured in the Euclidean metric is denoted by $\tu{diam}(E)$.
	\item For two subsets $A$ and $B$ of $\widehat{\mathbb{C}}$, the notation   $A\Subset B$  means that $\overline{A}$ is contained in the interior of $B$.
	\item Let $f$ be a rational map on $\olC$. We write $\tu{crit}(f)$ as the set of critical points of $f$. The Julia set of $f$ is denoted by $\mc{J}_f$. The orbit of $z\in\olC$ under $f$ is $\tu{orb}(z):=\bigcup_{n\geq 0}f^n(z)$.
\end{itemize}
\end{notation}

\vskip 1 em \noindent{\it Acknowledgment}: 
The authors would like very much to thank Professor Xiaoguang Wang for helpful discussion. This research is partially supported by the ANR ABC and the NSF of China.

\section{Some distortion lemmas}\label{sec:distortion}
This section collects some distortion properties for proper holomorphic maps, which will be used later on.

Let $A\subset\mb{C}$ be an annulus with non-degenerated boundary components. Then there exists a confromal map sending $A$ to a standard annulus $\{z\in\mb{C}: 0<r<|z|<1\}$, where $r>0$ is uniquely determined by $A$. The \emph{modulus} of $A$ is defined as $\tu{mod}(A)=\frac{1}{2\pi}\tu{log}(1/r)$, which is invariant under conformal maps. We need the following distortion property of modulus under proper holomorphic maps.

\begin{lem}\cite[Lemma 4.5]{KL1}\label{lem:moduli}
	Let $U_i\Subset V_i\subset \mb{C}$ be topological disks, where $i=1,2$. Suppose that $f:V_1\to V_2$ is a proper holomorphic map of degree $\delta\geq 1$ and $U_1$ is a component of $f^{-1}(U_2)$. Then 
	$$\tu{mod}(V_1\setminus\ol{U}_1)\leq\tu{mod}(V_2\setminus\ol{U}_2)\leq \delta\cdot \tu{mod}(V_1\setminus\ol{U}_1).$$
	
\end{lem}
Consider a bounded topological disk $U$ in $\mb{C}$ and a point $z\in U$. The \emph{shape} of $U$ about $z$ is defined by
$${\rm Shape}(U,z)=\frac{\sup_{x\in \partial U}|x-z|}{\inf_{x\in \partial U}|x-z|}
.$$

It is clear that ${\rm Shape} (U,z)=1$ if and only if $U$ is a round
disk centered at the point $z$.

\begin{lem}\label{4:distortionlemma}
	Let $U\Subset V$ be bounded topological disks in
	$\mathbb{C}$ with  $\tu{mod}(V\setminus\overline{U})\ge m>0$. Then there is a constant $C=C(m)>0$
	such that for any two points $x,y\in U$,
	$${\rm Shape} (V,y)\leq C\cdot{\rm Shape} (V,x).$$
\end{lem}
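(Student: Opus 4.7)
The plan is to lift to the unit disk via the Riemann map based at $x$, use a modulus estimate to confine $y$ to a compact subdisk, and then invoke Koebe distortion. Let $\phi\colon\mathbb{D}\to V$ be the Riemann map with $\phi(0)=x$, and set $y'=\phi^{-1}(y)$. Since moduli are conformal invariants, $\overline{\phi^{-1}(U)}$ is a continuum in $\mathbb{D}$ containing both $0$ and $y'$ whose complement in $\mathbb{D}$ is an annulus of modulus at least $m$. Standard modulus estimates (the Gr\"otzsch/Teichm\"uller extremal domain theorem, using that the segment $[0,t]$ is extremal among continua in $\mathbb{D}$ containing $0$ and a point of modulus $t$) then yield $|y'|\le\rho(m)$ for some $\rho(m)<1$ depending only on $m$.

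Next I would apply the Koebe distortion theorem to $\phi$ on $D(0,\rho(m))$. On this subdisk $|\phi'(z)|$ is comparable to $|\phi'(0)|$ with constants depending only on $m$, so the conformal radii of $V$ at $y=\phi(y')$ and $x=\phi(0)$, namely $|\phi'(y')|(1-|y'|^2)$ and $|\phi'(0)|$, are comparable. By the two-sided bound between conformal radius and Euclidean distance to the boundary (Koebe's $1/4$-theorem on one side and Schwarz's lemma on the other), this produces a lower bound $d(y,\partial V)\ge c(m)\,d(x,\partial V)$. Koebe's growth estimate applied to the normalized univalent map $(\phi(z)-x)/\phi'(0)$ on $D(0,\rho(m))$ likewise yields $|x-y|\le C(m)\,d(x,\partial V)$.

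Finally, I combine these with the triangle inequality. Write $r=\inf_{z\in\partial V}|z-x|=d(x,\partial V)$, $R=\sup_{z\in\partial V}|z-x|$, and $r'$, $R'$ analogously for $y$. The previous step gives $r'\ge c(m)\,r$, while the triangle inequality together with $r\le R$ yields $R'\le R+|x-y|\le R+C(m)\,r\le(1+C(m))\,R$. Dividing, one obtains
\[
{\rm Shape}(V,y)=\frac{R'}{r'}\le\frac{1+C(m)}{c(m)}\cdot\frac{R}{r}=K(m)\,{\rm Shape}(V,x),
\]
which is the required inequality. The main technical input is the uniform confinement $|y'|\le\rho(m)<1$ in the first step; once that is established, the conclusion follows by a routine application of Koebe distortion and the triangle inequality.
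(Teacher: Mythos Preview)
Your argument is correct and uses the same fundamental ingredients as the paper's proof (a Gr\"otzsch-type modulus bound together with Koebe distortion), but the paper organizes them more economically. Instead of uniformizing via the Riemann map and comparing conformal radii, the paper works directly in Euclidean coordinates: the bound on the outer radius is obtained trivially as $R_y\le 2R_x$ (since $|x-y|\le R_x$ for $x,y\in U\subset V$), so no Koebe growth estimate is needed for that part. The only nontrivial step is then to control the inner radius, and here the paper proves the single inequality $r_y\ge c(m)\,\mathrm{diam}(U)$ (via Gr\"otzsch and Koebe applied to the annulus $V\setminus\overline{U}$), which combined with the elementary $r_x\le r_y+\mathrm{diam}(U)$ gives $r_x\le(1+1/c(m))\,r_y$. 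Your route through the Riemann map and conformal radii is perfectly valid and perhaps more conceptual, but it does a bit more work than necessary: you invoke Koebe twice (for the conformal-radius comparison and for the growth bound $|x-y|\le C(m)\,r$), whereas the paper needs only one Koebe-type estimate.
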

\begin{proof}
	By \cite[Lemma 1]{YZ}, there is a constant $c=c(m)$ such that for any $z\in U$ we have $$\tu{diam}(U)\leq c\cdot \tu{min}_{w\in\partial V}|z-w|.$$  Let $R_z:=\tu{max}_{w\in\partial V}|z-w|$ and $r_z:=\tu{min}_{w\in\partial V}|z-w|$. We estimate that
	$$R_y\leq |x-y|+R_x\leq\tu{diam}(U)+R_x\leq c\cdot r_x+R_x\leq (c+1)R_x.$$ On the other hand, we assume $r_y=|y-w|$ for some $w\in\partial V$ and it then holds that
	$$r_x\leq |x-w|\leq |x-y|+|y-w|\leq \tu{diam}(U)+r_y\leq (c+1)r_y.$$
	The above two inequalities imply that the lemma follows with $C:=(c+1)^2.$
\end{proof}

The next two lemmas tell us that shapes and diameters of disks obtained by pullback via proper holomorphic maps can be controlled by some constants depending only on degrees and moduli.

\begin{lem}\cite[Lemma 6.1]{QWY}\label{2:distortionlemma}
	Let $E_i\subseteq U_i\Subset V_i$, $i\in \{1,2\}$, be topological disks in
	$\mathbb{C}$ with $\tu{mod}(V_2\setminus U_2)\geq m>0$. Suppose that $g:V_1\to V_2$ is
	a proper holomorphic map of degree $\leq\delta$, $E_1$ and $U_1$ be
	components of $g^{-1}(E_2)$ and $g^{-1}(U_2)$, respectively. Then, there is a constant $C=C(\delta, m)$ such that
	\begin{enumerate}
		\item for all $z\in E_1$, the shape satisfies
		$${\rm Shape} (E_1,z)\leq C\cdot{\rm Shape} (E_2,g(z));$$
		\item $\tu{diam}(U_1)/\tu{diam}(E_1)\leq C\cdot \tu{diam}(U_2)/\tu{diam}(E_2).$
	\end{enumerate}
\end{lem}

\begin{lem}\cite[Lemma 2]{YZ}\label{3:distortionlemma}
	Let $0\in U_i\Subset V_i\Subset \mb{D}$, $i\in\{1,2\}$, be topological disks. Suppose that $g: (U_1, V_1, \mb{D})\to(U_2, V_2, \mb{D})$ is a holomorphic map with $\deg(g|_{U_1})=\deg(g |_{V_1}) = \deg(g
	|_{\mathbb{D}}) = \delta\geq 2$. If $ \mod(V_2 \setminus U_2) \ge m
	> 0$, then there exists a constant $C=C(\delta,m) > 0$ such that $${\rm Shape}(V_1, 0) \le
	C\cdot{\rm Shape}(V_2, 0)^\frac1\delta.$$
\end{lem}

\section{Some basic facts for Newton maps}\label{sec:Newton}
Let $p:\mb{C}\to\mb{C}$ be a complex polynomial with at least two distinct roots. It can be factored as
$$p(z)=a(z-\xi_1)^{n_1}\cdots(z-\xi_d)^{n_d},$$
where $a\neq 0$ and $\xi_1,\cdots, \xi_d\in\mb{C}$ are  distinct roots of $p$, with multiplicities $n_1,\cdots,n_d\geq 1$, respectively.
Its Newton map $f_p:\olC\to\olC$, defined as $f_p(z)=z-\frac{p(z)}{p'(z)}$, fixes each root $\xi_k$ with multiplier
$$f_p'(\xi_k)=\frac{p(z)p''(z)}{p'(z)^2}\Big|_{z=\xi_k}=\frac{n_k-1}{n_k}.$$

Therefore, each root $\xi_k$ of $p$ corresponds to  an   attracting  fixed point of $f_p$ with multiplier $\frac{n_k-1}{n_k}$.
It follows from the equation
$$\frac{1}{f_p(z)-z}=-\sum_{k=1}^d\frac{n_k}{z-\xi_k}$$
that the degree of $f_p$ equals $d$, the number of  distinct roots of $p$.
One also verifies that $\infty$ is a repelling fixed point of $f_p$ with multiplier
$$\lambda_\infty:=\frac{\sum_{k=1}^d n_k}{\sum_{k=1}^d n_k-1}.$$

From the above discussion, we see that a degree-$d$ Newton map has $d+1$ distinct fixed points in $\olC$ with specific multipliers. Conversely, a well-known result of Head states that the positions of fixed points together with specific multipliers can determine a unique Newton map:

\begin{thm}\cite{He87}\label{char} A rational map $f:\olC\rightarrow \mathbb{\widehat{C}}$ of degree $d\geq2$
	is the Newton map of a polynomial $p$ if and only if
	$f(\infty)=\infty$ and for all other fixed points $\xi_1, \cdots, \xi_d\in \mathbb{C}$, there are  integers $n_k\geq 1$ so that
	$f'(\xi_k)=\frac{n_k-1}{n_k}$ for all $1\leq k\leq d$. In this case, $p$ has the form $a(z-\xi_1)^{n_1}\cdots(z-\xi_d)^{n_d}$ with $a\neq0$.
\end{thm}
A Newton map of degree $d=2$ has two completely invariant (super-)attracting Fatou domains and its Julia set is a quasi-circle. Theorem \ref{th:NILF} and Theorem \ref{th:conjugate} are obviously true. In the rest of this article, it is always assumed that $d\geq 3$.

Now, for the Newton map $f=f_p$ of $p$. Let $B_f$ be the set of points in $\olC$ whose orbits under $f$ converge to the roots of $p$. Then $B_f$ is open and completely $f$-invariant. The component of $B_f$ containing the root $\xi_i$, denoted by $B_i$, is called the \emph{immediate root basin} of the fixed point $\xi_i$. Clearly $f(B_i)=B_i$ and $B_f=\bigcup_{k=0}^{+\infty}f^{-k}(B_1\cup\cdots\cup B_d)$.

A Newton map $f$ is called \emph{postcritically finite} on ${B}_f$,  if there are only finitely many postcritical points in $B_f$, or equivalently, each critical point in ${B}_f$ will eventually hit one of $\xi_1,\cdots,\xi_d$. Let $\mathcal{N}_d$ be the collection of all degree-$d$ Newton maps $f$, and let
$$\mathcal{N}^{*}_d:=\{f\in \mathcal{N}_d: f \text{ is postcritically finite on } {B}_{f}.\}.$$

Each element $f\in\mc{N}_d$ determines a unique map $f^*$ in $\mc{N}_d^*$ up to affine conjugacy. Indeed, according to \cite{Sh09}, the Julia sets of Newton maps are always connected, or equivalently, all Fatou domains are simply connected. We take a topological disk $\wt{B}_i$ in each immediate root basin $B_i$ such that $f(\wt{B}_i)\Subset \wt{B}_i$ and $\wt{B}_i$ covers all critical orbits in $B_i$. Then there exists an integer $N$ such that the set
$$\wt{B}_f:=f^{-N}(\bigcup_{i=1}^d\wt{B}_i)\Subset B_f$$
formed by topological disks contains all the critical orbits in $B_f$. One can apply a standard quasiconformal surgery on $B_f$ (see \cite{BF14})
 to obtain a quasiconformal map $h:\olC\to \olC$ and a rational map $f^*$, such that 
 
 \begin{itemize}
 	\item $h({\xi}_i),1\leq i\leq d,$ are the super-attracting fixed points of $f^*$;
 	\item $f^*$ is postcritically finite on $h(B_f)$;
 	\item $h\circ f=f^*\circ h$ on $\widehat{\mathbb{C}}\setminus \wt{B}_f$;
 	\item $(f^*)'(\infty)=\frac{d}{d-1}(>1)$ by \cite[Theorem 12.4]{M1}.
  \end{itemize}
Theorem \ref{char} implies that $f^*\in \mathcal{N}^{*}_d$. Since $\ol{\partial}h=0$ on $\olC\setminus B_f$, the map $f^*$ is unique up to a affine conjugacy.
 
The virtue of introducing the family $\mc{N}_d^*$ is that one can give a natural dynamical parameterization of the set $B_{f^*}$; see \cite{M1}.

\begin{lem}
	\label{lem:coordinates}
	Let $f^*\in\mc{N}_d^*$. Then there exist, so-called \emph{B\"ottcher coordinates} $$\{(B,\Phi_B)\}_{B\in{\rm{Comp}} (B_{f^*})},$$ such that
	\begin{itemize}
		\item[(1)] each $\Phi_B:B\to \mathbb{D}$ is conformal;
		\item[(2)] $\Phi_{f^*(B)}\circ f^*\circ\Phi_B^{-1}(z)=z^{d_{B}}, z\in\mathbb{D}$, where $d_B={\rm deg}(f^*|_{B})$.
	\end{itemize}
\end{lem}

In general, for each $B\in {\rm Comp}({B}_{f^*})$, the B\"ottcher map $\Phi_B$ is not unique. There are $d_B-1$ choices of $\Phi_B$ when $f^*(B)=B$, and
$d_B$ choices of $\Phi_B$ when $f^*(B)\neq B$ and $\Phi_{f^*(B)}$ is determined.
Once the B\"ottcher coordinates $\{(B,\Phi_B)\}_{B\in{\rm{Comp}}({B}_{f^*})}$ is fixed,  we may introduce the notion of {\it internal rays}.

Let $B$ be a component of ${B}_{f^*}$,  the point $\Phi_B^{-1}(0)$ is called the \emph{center} of $B$, and the Jordan arc
$$R_B(\theta):=\Phi_B^{-1}(\{re^{2\pi i\theta}:0\leq r<1\})$$
is called the \emph{internal ray} of angle $\theta$ in $B$. According to Douady-Hubbard's theory, when $\theta$ is rational, the ray $R_B(\theta)$ always \emph{lands}, i.e., it accumulates at only one point in $\partial B$.
\section{Branner-Hubbard-Yoccoz puzzles}\label{sec:puzzles}
By a \emph{graph} we mean a connected and finite graph in $\olC$. Precisely, a graph is a connected set in $\olC$, which can be written as the union of finitely many closed arcs with their interiors pairwise disjoint. A point $z$ in a graph $G$ is called a \emph{cut point} (resp. \emph{non-cut point}) with respect to $G$ if the set $ G\setminus\{z\}$ is disconnected (resp. connected). 
It is useful to observe that all components of $\olC\setminus G$ are Jordan domains if and only if $G$ has no cut points. 

Let $G$ be a graph with finitely many cut points. There is a natural way to produce a new graph from $G$ such that it has no cut points; see Figure \ref{fig:cut}.

\begin{figure}[h]
	\begin{tikzpicture}
	\node at (0,0) {\includegraphics[width=9cm]{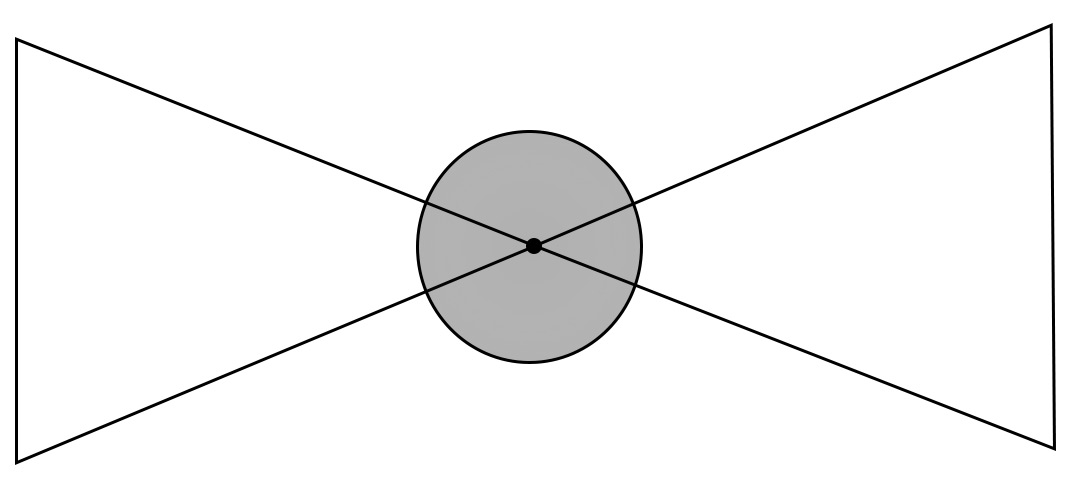}};
	\node at (0.75,0.7) {$D_z$};
	\node at (0.15,-0.29) {$z$};
	\node at (3,0) {$G$};
	\end{tikzpicture}
	\caption{The graph $G$ is the union of boundaries of the two symmetric triangles as shown above. Since the center $z$ is a cut point, the unbounded component of $\olC\setminus G$ is not a Jordan domain. We take a closed topological disk $D_z$ around the point $z$ such that $D_z\cap G$ is a star-like tree. The new graph $\partial(G\cup D_z)\,(=(G\setminus D_z)\cup\partial D_z)$ has no cut points. In general, if $G$ possesses finitely many cut points $z_1,\ldots, z_n$, then the new graph $\partial (G\cup\bigcup_{1\leq i\leq n}D_{z_i})$ has no cut points, where $D_{z_i}$ are pairwise disjoint closed topological disks associated to $z_i$. This is cited from \cite[Figure 15]{WYZ}}
	\label{fig:cut}
\end{figure}

\subsection{Puzzle pieces for maps in $\mc{N}_d^*$ and $\mc{N}_d$}
For each $f^*\in \mc{N}^*_d$, there is a natural $f^*$-invariant graph, called \emph{channel graph}, associated to $f^*$. Indeed, for any immediate root basin $B$ of $f^*$, there are exactly $d_B-1$ fixed internal rays in $B$:
$$R_B(j/(d_B-1)), 0\leq j\leq d_B-2,\tu{ where }d_B=\tu{deg}(f|_B).$$ Each of these rays must land at a fixed point on $\partial B$. Since $\infty$ is the unique fixed point of $f$ on its Julia set, all fixed internal rays land the the common point $\infty$. The \emph{channel graph} of $f^*$, denoted by $\Delta$, is defined by 
$$\Delta:=\bigcup_{B}\bigcup_{j=0}^{d_B-2}\ol{R_B(j/(d_B-1))},$$ where $B$ ranges over all immediate root basins $B_1, \cdots, B_{d}$. Clearly $f^*(\Delta)=\Delta$. 

In general, the point $\infty$ is a cut point with respect to $\Delta$ and the preimage $(f^*)^{-1}(\Delta)$ is disconnected. However, in \cite{WYZ} the 
following invariant graph with nice properties is constructed. It is the key point to construct the Branner-Hubbard-Yoccoz puzzles for Newton maps.

\begin{prop}\cite[Theorem 4.1 and Proposition 4.8]{WYZ}\label{prop:invariant}
	Let $f^*\in \mathcal{N}^{*}_d$. There exists a graph $G\subset\olC$ such that
	\begin{enumerate}
		\item $f^*(G)\subseteq G$ and $ (f^*)^{-1}(G)$ is connected;
		\item $(f^*)^k(G)=\Delta$ for some $k\geq 0$;
		\item The only possible cut points in $G$ are the centers of components of $B_{f^*}$.
	\end{enumerate}
\end{prop}
There are two remarks. First, the graph $G$ in Proposition \ref{prop:invariant} can be decomposed into two disjoint parts: $G\cap\mc{J}_{f^*}$ and $G\cap B_{f^*}$. The former part intersects $\mc{J}_{f^*}$ in finitely many points, all of which are iterated preimages of $\infty$, while the latter part, belonging to the union of finitely many components of $B_f$, is eventually mapped onto $\Delta\setminus\{\infty\}$; 
Second, each $G_n:=(f^*)^{-n}(G)$ for $n\geq 1$ still satisfies these three conditions. Indeed, it is easy to see that conditions (1) and (2) hold for $G_n$, while condition (3) follows by Proposition \ref{prop:Jordan}.

The construction of puzzle pieces for $f^*$ is as follows.  Let $G_0$ be the graph in Proposition \ref{prop:invariant}. 
Let 
$$\mc{D}_0=\bigcup_{B\cap G\neq \es, B\in\tu{Comp}(B_{f^*})}\ol{\Phi_B^{-1}(D(0,1/2))}$$
formed by pairwise disjoint closed topological disks. Then $\mc{D}_{n}:=(f^*)^{-n}(\mc{D}_0)\subset \mc{D}_{n+1}$ for any $n\geq 0$ by Lemma \ref{lem:coordinates} (2). Let $\wt{G}_n:=G_n\cup \mc{D}_n$. An element in $\mc{P}^*_n:=\tu{Comp}(\olC\setminus \wt{G}_n)$ is called a \emph{puzzle piece} of depth $n$ for $f^*$.

Since $\wt{G}_{n+1}=(f^*)^{-1}\wt{G}_n$ and $\wt{G}_n\subset \wt{G}_{n+1}$, two distinct puzzle pieces are either disjoint or nested; puzzle pieces of depth $n+1$ are mapped onto those of depth $n$.

Any puzzle piece $P$ in $\mc{P}^*_0$ is a Jordan domain. Indeed, $P$ is a complementary component of the graph $\partial \wt{G}_0$. Since all cut points of $G_0$ are Fatou centers by Proposition \ref{prop:invariant} (3), the graph $\partial \wt{G}_0$ has no cut points as illustrated in Figure \ref{fig:cut}. Therefore, $P$ is a Jordan domain. In general, for puzzle pieces of depth $n$, we have the following fact:
\begin{prop}\cite[Proposition 5.1]{WYZ}\label{prop:Jordan}
	For any $n\geq 1$, every puzzle piece $P$ in $\mc{P}_n$ is a Jordan domain. Furthermore, if $P\subseteq f(P)$, then $\infty\in \partial P$ and $f:P\to f(P)$ is conformal. 	
\end{prop}
This proposition implies the graph $\partial \wt{G}_n$ has no cut points. Hence cut points of $G_n$ can only possible be the centers of components of $B_{f^*}$. Condition (3) of Proposition \ref{prop:invariant} holds for $G_n$. 

Now, we construct puzzle pieces for an arbitrary map $f\in\mc{N}_d$. Recall that, associated to $f$, there is a Newton map $f^*\in \mc{N}_d^*$, a quasiconformal map $h:\olC\to\olC$ and a set $\wt{B}_f\Subset B_f$ consisting of topological disks such that $f$ is conjugate to $f^*$ on $\olC\setminus \wt{B}_f$ via the map $h$. 

Let $G_n, \mc{D}_n, \wt{G}_n$ and $\mc{P}_n^*$ be the sets defined as above for the map $f^*$. Then one can transfer puzzle pieces of $f^*$ to that of $f$ by $h$. Indeed, since $h(\wt{B}_f)\Subset B_{f^*}$ and $B_{f^*}=\bigcup_{n=0}^{+\infty}\mc{D}_n$, there is a large integer $n_0$ such that $h(\wt{B}_f)\Subset \mc{D}_{n_0}$. Let $\mc{P}_n$ be the collection $\{h^{-1}(P^*), P^*\in \mc{P}^*_{n+n_0}\}$ for $n\geq 0$. An element in $\mc{P}_n$ is called a \emph{puzzle piece} of depth $n$ for $f$. Clearly the images $h(P), P\in \mc{P}_n$, contained in $\olC\setminus h(\wt{B}_f)$, are puzzle pieces of $f^*$ with depth $n+n_0$. 

\subsection{Nested properties of puzzle pieces}
We first consider the unbounded puzzle pieces. The point $\infty$ always lies in their boundaries. Since $\partial \wt{G}_n\cap \Delta=\Delta\setminus\tu{int}(\mc{D}_n)$, the number of unbounded puzzle pieces of depth $n$ is a constant independent on $n$. This number is $d_0:=\sum_{i=1}^d(d_{B_i}-1)$. Clearly $d_0\geq 3$ as $d$ is assumed greater than two.
Let
$$\mathcal{P}_n(\infty)=\left\{P_{n,1}(\infty),\cdots,P_{n,d_0}(\infty)\right\}\subset \mc{P}_n$$ be the collection of all unbounded puzzle pieces of depth $n$, numbered in such a way that $P_{n+1,k}(\infty)\subset P_{n, k}(\infty)$ and $P_{n,1}(\infty),\ldots,P_{n, d_0}(\infty)$ are in the anti-clockwise order around the point $\infty$. Then by Proposition \ref{prop:Jordan} there are conformal maps
\begin{equation}\label{eq:conformal}
	f:P_{n+1,k}(\infty)\to P_{n,k}(\infty).
\end{equation}
Let $P_n(\infty)$ be the interior of $\bigcup_{k=1}^{d_0}\ol{P_{n,k}(\infty)}$. Clearly $\partial P_n(\infty)=\bigcup_{k=1}^{d_0}\partial P_{n,k}(\infty)\setminus \Delta$.
\begin{lem}[Nested properties for unbounded puzzles I]\label{lem:nest}
	 The following statements hold.
	\begin{itemize}
	\item[(1)] There exists a large integer $N$ such that $P_{N}(\infty)\Subset {P_0(\infty)}$.
	\item[(2)] $\bigcap P_n(\infty)=\{\infty\}$.
		
	\end{itemize}
\end{lem}
\begin{proof}
	Without loss of generality, we assume $f\in\mc{N}_d^*$. Since $\partial\wt{G}_n\cap \Delta=\Delta\setminus\tu{int}(\mc{D}_n)$, the set $\partial P_{n, k}(\infty)\setminus\Delta$ is an open Jordan arc, denoted by $\Gamma_{n, k}$, for any $n\geq 0$ and $1\leq k\leq d_0$. It holds that $f(\Gamma_{n+1,k})=\Gamma_{n,k}$ and $\Gamma_{n+1,k}\subseteq \ol{P_{n,k}(\infty)}$.
		
		 Since $\Gamma_{n,k}\subset \partial \wt{G}_n$ consists of finitely many segments from perperiodic internal rays, for some large integer $n_k$ we have $\Gamma_{n_k, k}\cap\Gamma_{0, k}=\es$ and so $\Gamma_{n_k, k}\subset P_{0,k}(\infty)$. Let $N:=\tu{max}\{n_1,\ldots,n_{d_0}\}$. Note that $\partial P_{n}(\infty)=\bigcup_{1\leq k\leq d_0} \Gamma_{n,k}$ for all $n$. Thus $\partial P_{N}(\infty)\subset P_{0}(\infty)$. The statement of (1) holds.
		 
		 We claim that the set $P_{N}(\infty)$ is a Jordan domain, which forms a neighborhood of $\infty$. Indeed, since the $d_0$ open arcs $\Gamma_{N, k}$ lie in the interiors of distinct puzzle pieces among $\mc{P}_N(\infty)$, they are pairwise disjoint. So their union forms a Jordan curve, which is the boundary of $P_N(\infty)$. It also follows that $P_n(\infty), n>N,$ are Jordan domains, as they are pullbacks of $P_N(\infty)$.
		 
		 For the statement of (2), consider the conformal map $g:=f^N: P_N(\infty)\to P_0(\infty)$. The intersection $\bigcap P_n(\infty)$ is enclosed by pairwise disjoint annuli $H_i:=g^{-i}(P_0(\infty)\setminus\ol{P_N(\infty)})$. These annuli have the same modulus $\tu{mod}(H_0)$. The Gr\"{o}tzsch inequality implies that $$\tu{mod}\left(P_0(\infty)\setminus\bigcap\ol{P_n(\infty)}\right)=\infty.$$
		Thus $\bigcap\ol{P_n(\infty)}=\bigcap P_n(\infty)$ is a singleton. The proof is complete.		 
\end{proof}

The \emph{grand orbit} of $\infty$ is denoted by $\Omega_f:=\bigcup_{k\geq 1}f^{-k}(\infty)$. For each $z\in \Omega_f$ with $f^{n_0}(z)=\infty$ for some minimal integer $n_0\geq 1$, let $P_{n+n_0}(z)$ be the component of $f^{-n_0}(P_n(\infty))$ containing the point $z$ for $n\geq 0$. The collection of puzzle pieces of depth $n+n_0$ covered by $P_{n+n_0}(z)$ is denoted by $\mc{P}_{n+n_0}(z)$. Clearly $\#\mc{P}_{n+n_0}(z)=\tu{deg}_z(f^{n_0})\cdot d_0$.
 
We remark that the nested properties in Lemma \ref{lem:nest} also hold for $P_{n}(z), n\geq {n_0}, z\in \Omega_f$. One should be careful that by definition $P_n(z)$ is not a puzzle piece but contains at least three puzzle pieces in this case.

For each $z\in\mc{J}_f\setminus \Omega_f$, let $P_n(z)$ be the puzzle piece of depth $n$ containing the point $z$. Obviously we have $f(P_{n+1}(z))=P_n(f(z))$.

\begin{lem}[Nested properties for unbounded puzzles II]\label{lem:nest-infty}
	Let $N$ be an integer with $P_{N}(\infty)\Subset P_{0}(\infty)$. Let $z\in \mc{J}_f\setminus\Omega_f$ be a point in $P_{N, m}(\infty)\in\mc{P}_{N}(\infty)$. Then there is an integer $s\geq 0$   with the following property:
	$$P_{N+1}(f^s(z))\Subset P_{0, m}(\infty)\in\mc{P}_{0}(\infty)$$
	and $f^s: (P_{N+s+1}(z), P_{s, m}(\infty))\to (P_{N+1}(f^s(z)), P_{0,m}(\infty)$ is conformal.
\end{lem}
\begin{proof}
	From the construction of puzzle pieces around $\infty$, we know that 
	$$P_{N,m}(\infty)=\bigcup_{k\geq 0}P_{N+k,m}(\infty)\setminus P_{N+k+1,m}(\infty).$$
	Let $s$ be the minimal integer such that $z\in P_{N+s,m}(\infty)\setminus P_{N+s+1,m}(\infty)$. Then $$P_{N+s+1}(z)\subset P_{N+s,m}(\infty)\setminus P_{N+s+1,m}(\infty).$$ By \eqref{eq:conformal}, we obtain the required conformal map $f^s$. Since $$P_{N+1}(f^s(z))\subset P_{N,m}(\infty)\sm P_{N+1,m}(\infty),$$ the condition $P_N(\infty)\Subset {P_0(\infty)}$ implies that $P_{N+1}(f^s(z))\Subset P_{0, m}(\infty).$ The proof is complete.
\end{proof}

\begin{lem}[Nested properties for puzzles around $z\in \mc{J}_f\setminus\Omega_f$]\label{lem:nest-finite}
	Let $z$ be a point in $\mc{J}_f\setminus \Omega_f$ and $P_k(z)$ be a puzzle piece of depth $k\geq 0$. There exists a large integer $n_k$ such that $P_{n_k}(z)\Subset P_k(z)$.
\end{lem}

\begin{proof}
	We first claim that for two puzzle pieces $P$ and $Q$, $\partial P\cap\partial Q=\es$ if and only if the two sets $\partial P\cap\mc{J}_f$ and $\partial Q\cap \mc{J}_f$ are disjoint. To show $\partial P\cap\partial Q=\es$, note that a point in $\partial P\cap\partial Q\setminus \mc{J}_f$ must be contained in a unique internal ray $R$. The landing point of $R$ belongs to both $\partial P$ and $\partial Q$. Thus $\partial P$ and $\partial Q$ have at least a common point in $\mc{J}_f$, which is a contradiction.
	
	Assume the conclusion of this lemma is not true. Note that $P_{n}(z)\subset P_{m}(z)$ whenever $n>m$ and $P_{n}(z)\cap\mc{J}_f$ consists of only finitely many points in $\Omega_f$.  By the above claim, there is a point $\xi\in \partial P_{n}(z)\cap \Omega_f$ for all $n\geq k$. Without loss of generality, one may assume $\xi=\infty$. By Lemma \ref{lem:nest}\,(2), the set $\bigcap\ol{P_n(z)}$ that contains both $\infty$ and $z$ is a singleton. Hence $z=\infty$, a contradiction. The proof is complete.
\end{proof}


\begin{prop}\label{prop:shrinking}
	Let $f\in\mc{N}_d$ be a non-renormalizable Newton map. Then for any preperiodic point $z\in \mc{J}_f\setminus\Omega_f$, $\bigcap P_n(z)=\{z\}$.
\end{prop}
\begin{proof}
	It suffices to consider the case that $z$ is periodic and $z\neq \infty$. Let $e(z):=\bigcap P_n(z)=\bigcap\ol{P_n(z)}$ and $p>1$ be the period of $z$. Let $n_0$ be a large integer such that the annulus $P_{n_0}(z)\setminus e(z)$ is disjoint from critical points of $f^p$. By Lemma \ref{lem:nest-finite}, there exists a puzzle piece $P_{n_0+kp}(z)$ compactly nested in $P_{n_0}(z)$ for some large integer $k$. Suppose $e(z)$ is not a singleton. Then $g=f^{k}: P_{n_0+kp}(z)\to P_{n_0}(z)$ is a polynomial-like mapping with connected filled-in Julia set $K(g)=e(z)$. This is a contradiction since $f$ is non-renormalizable.
\end{proof}

\section{On wandering points satisfying the elevator condition}\label{sec:wandering}
A point $z$ in $\mc{J}_f$ is called \emph{wandering} if its orbit $\tu{orb}(z):=\bigcup_{n\geq 0} f^n(z)$ is infinite. In this section we aim to show that wandering points satisfy the \emph{elevator condition} except the ones in $X_f$; see Corollary \ref{cor:mes}.

Throughout this section we assume $f$ is a non-renormalizable Newton map.

\begin{defi}[The bounded degree condition and elevator condition] A wandering point $z\in\mc{J}_f$ is said to satisfy the \emph{bounded degree condition}, if there exist puzzle pieces $V_k$ and $V$ with $z\in V_k$ and integers $n_k\to \infty$ as $k\to \infty$ such that 
$$\tu{deg}(f^{n_k}:V_k\to V)\leq \delta\tu{ for all }k\geq 1.$$ Furthermore, if there exists a puzzle piece $U$ compactly nested in $V$ such that $U$ contains infinitely many points of $f^{n_k}(z)$, then the point $z$ is said to satisfy the \emph{elevator condition}. In this case, by passing to a subsequence, one may assume $f^{n_k}(z)\in U$, and the triple $(U, V, n_k)$ is called an \emph{elevator} for the point $z$. 
\end{defi}
We remark that the elevator condition can be generalized in two aspects: (1) the topological disks $U$ and $V$ are not necessary to be puzzle pieces; (2) the point $z$ is allowed to be (pre)-periodic point and satisfies that the index set $\{n_k: f^{n_k}(z)\in U\}$ is infinite. In sense of this definition, (pre-)periodic points in $\mc{J}_f$ including those in $\Omega_f$ satisfy the elevator condition.

\begin{lem}\label{lem:elevator}A wandering point $z\in\mc{J}_f$ with the bounded degree condition satisfies the elevator condition.
\end{lem}
\begin{proof}
	By definition, we may assume the puzzle piece $V$ contains infinitely many points $f^{n_k}(z)$. If there is an accumulation point $x$ of $\{f^{n_k}(z)\}$ such that $x\in \mc{J}_f\setminus\Omega_f$, then by Lemma \ref{lem:nest-finite}, we obtain a puzzle piece $U$ containing $x$ that is compactly nested in $V$. Since $x$ is an interior point of $U$, the set $\{f^{n_k}(z):f^{n_k}(z)\in V\}$ is infinite. 
	
	We are reduced to the case that the $\omega$-limit set of $\{f^{n_k}(z)\}$ belongs to $\Omega_f$. By considering $\{f^{n_k+l}(z)\}$ if necessary, one may assume $\infty$ is an accumulation point of $\{f^{n_k}(z)\}$ and $V=P_{0,m}(\infty)\in\mc{P}_0(\infty)$. In this case, by Lemma \ref{lem:nest} we assume further $P_N(\infty)\Subset P_0(\infty)$ for some integer $N$. Note that $P_{N, m}(\infty)$ contains infinitely many points of $\{f^{n_k}(z)\}$ as well. By passing to a subsequence, assume $f^{n_k}(z)\in P_{N, m}(\infty)$. According to Lemma \ref{lem:nest-infty} for each point $z_{n_k}:=f^{n_k}(z)$, we obtain a puzzle piece $P_{N+1}(z_{n_k+s_k})\Subset P_{0,m}(\infty)$ and a conformal map:
	$$g_k:=f^{s_k}: (P_{N+s_k+1}(z_{n_k}), P_{s_k, m}(\infty))\to (P_{N+1}(z_{n_k+s_k}), P_{0,m}(\infty)).$$
	Note that $P_{s_k, m}(\infty)\subset P_{0,m}(\infty)$.

	Let $V_{k}$ and $\wt{V}_{k}$ be the components of $f^{-n_k}(P_{s_k, m}(\infty))$ and $f^{-n_k}(P_{0,m}(\infty))$ containing the point $z$, respectively. Consider the two maps $$h_k:=f^{n_k}: (V_k,\wt{V}_{k})\to (P_{s_k,m}(\infty), P_{0,m}(\infty))$$ and $g_k: P_{s_k, m}(\infty)\to P_{0,m}(\infty)$. The composition 	
	$g_k\circ h_k=f^{n_k+s_k}: V_{k}\to P_{0,m}(\infty)$ has a uniformly bounded degree. Note that $P_{N+1}(z_{n_k+s_k})\Subset P_{0,m}(\infty)$ and the number of puzzle pieces of depth $N+1$ is finite. Thus one of those $P_{N+1}(z_{n_k+s_k})$, say $U$, contains infinitely many points of $\{z_{n_k+s_k}\}$. Hence by passing to a subsequence, $(U, V, n_k+s_k)$ is an elevator for the point $z$. The proof is complete.	 
\end{proof}

Let $z\in\mc{J}_f$ be a wandering point and $P$ be a puzzle piece. The \emph{first entry time} of $z$ into $P$, denoted by $r_z(P)$, is the minimal integer $k\geq 1$ such that $f^k(z)\in P$. If no such integer exists, we set $r_z(P)=\infty$. If $r_z(P)\neq \infty$, we de{\normalsize }note by $L_z(P)$ the unique puzzle piece containing $z$ such that $f^{r_z(P)}(L_z(P))=P$. 

Recall that $\kappa$ is the number of critical points in $\mc{J}_f$.
\begin{lem}[Properties of the first entrance for points]\label{lem:first-time}
	Let $z\in\mc{J}_f$ be a wandering point. Let $P$ be a puzzle piece such that the first entry time $r=r_z(P)$ is finite. Then the degree of $f^r: L_z(P)\to P$ is bounded by $d^\kappa$.
\end{lem}
\begin{proof}
	We write $Q_k:=f^k(L_z(P)), 0\leq k\leq r$. Claim that the $r$ puzzle pieces $Q_0,\cdots Q_{r-1}$ are pairwise disjoint. For otherwise, assume $Q_{r_1}\cap Q_{r_2}\neq \emptyset$ with $r_1<r_2$. Then $f^{r_1}(z)\in Q_{r_1}\subset Q_{r_2}$. We pull back $Q_{r_2}$ along the orbit $z\mapsto f(z)\mapsto\cdots\mapsto f^{r_1}(z)$. Then we obtain a puzzle piece $W$ containing $z$ whose depth is strictly less than that of $Q_0$ as $r_1< r_2$. Hence $f^{r_1+r-r_2}(W)=P$ and $z\in W$ with $r_1+r-r_2<r_z(P)$. It is a contradiction. 
	
	By the claim, each critical point of $f$ appears at most once in the orbit $Q_0,\cdots, Q_{r-1}$. Therefore, the degree of $f^r:L_z(P)\to P$ is bounded by $d^\kappa$. 
\end{proof}
\begin{defi}Let $z\in\mc{J}_f$ be a wandering point. A point $x\in \mc{J}_f$ (possibly $x\in\Omega_f$) is called a \emph{combinatorial accumulation point} of $\tu{orb}(z)$, written $x\in\omega_{\tu{comb}}(z)$, provided that, for any $n\geq 0$, the annulus $P_n(x)\setminus\bigcap_k\ol{P_k(x)}$ contains infinitely many points of $\tu{orb}(z)$.
\end{defi}
Recall that the $\omega$-limit set $\omega(z)\subset \olC$ is the set of accumulation points of $\tu{orb}(z)$ in sense of the spherical metric. The notion of the combinatorial accumulation set $\omega_{\tu{comb}}(z)$ coincides with the notion of the standard $\omega$-limit set $\omega(z)$ but for the topology generated by the set of all puzzle pieces together with those $P_n(x), x\in \Omega_f$.
Lemma \ref{lem:comb} describes some basic properties of combinatorial accumulation sets. 
\begin{lem}[Properties of $\omega_{\tu{comb}}(z)$]\label{lem:comb}
	Let $z\in\mc{J}_f$ be a wandering point. The following statements hold:
	\begin{enumerate}
		\item for any $x\in\mc{J}_f$, $x\in\omega_{\tu{comb}}(z)\Leftrightarrow$  for any $n\geq 0$, $P_n(x)$ contains infinitely many points of $\tu{orb}(z)$;
		\item for any $x\in\mc{J}_f$,
		$x\notin \omega_{\tu{comb}}(z)\Leftrightarrow P_{n_0}(x)\cap\tu{orb}(f^{n_0}(z))=\es\tu{ for some } n_0\geq 0.$
		\item $f(\omega_{\tu{comb}}(z))\subseteq \omega_{\tu{comb}}(z)=\omega_{\tu{comb}}(f(z))$;
		\item $\omega(z)\subseteq \omega_{\tu{comb}}(z)$;
		\item $\omega_{\tu{comb}}(x)\subseteq\omega_{\tu{comb}}(z)$ for any wandering point $x\in\omega_{\tu{comb}}(z)$;
		\item $\omega_{\tu{comb}}(z)$ contains infinitely many points.
	\end{enumerate}
\end{lem}
\begin{proof}
	(1) To show $x\in\omega_{\tu{comb}}(z)$, it suffices to prove that $e(x):=\bigcap P_k(x)$ contains at most finitely many points of $\tu{orb}(z)$. Actually, $\#e(x)\cap\tu{orb}(z)\leq 1$. For otherwise, assume $f^{n_0}(z), f^{n_1}(z)\in e(x)$ with $n_1-n_0=p\geq 1$. Since $f(e(x))=e(f(x))$, we have $$e(x)=e(f^{n_1}(z))=f^{p}(e(f^{n_0}(z)))=f^{p}(e(x)).$$
	By Lemmas \ref{lem:nest}\,(2) and \ref{lem:nest-finite}, one may take two puzzle pieces $U\Subset V$ containing $e(x)$ with the difference of their depths equal to $kp$ for some large integer $k$. Assume further that $V\setminus e(x)$ is disjoint from $\tu{crit}(f)$. Since $f$ is non-renormalizable, the map $g=f^{kp}: U\to V$ is conformal. Note that $e(x)$ is enclosed by disjoint annuli $g^{-i}(V\setminus\ol{U})$, which are of the same moduli $\tu{mod}(V\setminus\ol{U})$. By the Gr\"{o}tzsch inequality, $\tu{mod}(V\setminus e(x))=\infty$. Hence $e(x)$ is a singleton and so $x=f^{n_1}(z)=f^{n_2}(z)$. This implies the point $z$ is preperiodic, a contradiction.
	
	It is not hard to see that statements (2)--(5) are consequences of statement (1). We left the proofs to the reader. 
	
	To see (6), we argue by contradiction and assume $\omega_{\tu{comb}}(z)$ is finite. Since the set $\omega_{\tu{comb}}(z)$ is $f$-invariant, without loss of generality, we may assume $\omega_{\tu{comb}}(z)=\tu{orb}(x)$ for some preperiodic point $x\in\mc{J}_f$. Suppose $f^{m+p}(x)=f^{m}(x)$ for some minimal integers $m\geq 0$ and $p\geq 1$. By Lemma \ref{lem:nest}\,(2) and Proposition \ref{prop:shrinking}, there is a large integer $n_0$ such that the disks $P_{n_0-p}(x_i)$ around $x_i:=f^i(x)$ are pairwise disjoint. Since $\omega_{\tu{comb}}(z)=\tu{orb}(x)$, by definition the orbit $\tu{orb}(f^{k_0}(z))$ for some large integer $k_0$ is covered by the disks $P_{n_0}(x_i)$.
	
	Claim that $f^{p}(\xi)\in P_{n_0}(x_m)$ for any $\xi\in P_{n_0}(x_{m})\cap\tu{orb}(f_{k_0}(z))$. Indeed, the point $f^p(\xi)$ belongs to $P_{n_0-p}(x_m)$, which is disjoint from the other disks $P_{n_0}(x_i)$, $i\neq m$. Since $f^p(\xi)\in\tu{orb}(f^{k_0}(z))$, the choice of $n_0$ leads to $f^p(\xi)\in P_{n_0}(x_m)$.
	
	By the claim, it follows that $\xi\in e(x_m)$. Since $e(x_m)=\{x_m\}$ and $\xi\in\tu{orb}(z)$, the point $z$ is preperiodic. This is a contradiction. The proof is complete.
\end{proof}

\begin{lem}\label{lem:preperiodic}
	Let $z\in\mc{J}_f$ be a wandering point. If $\omega_\tu{comb}(z)$ contains a preperiodic point $x$, then the point $z$ satisfies the bounded degree condition.
\end{lem}
\begin{proof}
	Since $f(x)\in \omega_{\tu{comb}}(z)$, for simplicity we assume $x$ is periodic. As $\omega_{\tu{comb}}(z)$ contains infinitely many points, take a point $y\in \omega_{\tu{comb}}(z)\setminus\tu{orb}(x)$. Note that it is possible that $y\in\Omega_f$. By Proposition \ref{prop:shrinking}, there exists a puzzle piece $U:=P_{n_0}(y)$ for a large integer $n_0$ such that 
	\begin{equation}\label{eq:U}
			U\cap \tu{orb}(x)=\es\tu{ and }\#U\cap\tu{orb}(z)=\infty,
	\end{equation}
if $y\in\mc{J}_f\setminus\Omega_f$. If $y\in \Omega_f$, by Lemma \ref{lem:nest} such a puzzle piece $U$ satisfying \eqref{eq:U} can be chosen as $P_{n_0,i}(y)\in\mc{P}_{n_0}(y)$. We then consider two cases: $x\neq \infty$ and $x=\infty$. 

In the case that $x\neq \infty$, let $r_n$ be the first entry time of $z$ into $P_n(x), n\geq n_0$. The point $f^{r_n}(z)$ eventually falls into the puzzle piece $U$ by \eqref{eq:U}. We denote by $s_n$ the first entry time of $f^{r_n}(z)$ into $U$. Since $U\cap \tu{orb}(x)=\es$. The puzzle piece $Q_n:=L_{f^{r_n}(z)}(U)$ contains $f^{r_n}(z)$ but avoids the point $x$. Thus $Q_n\subseteq P_n(x)$.  Note that by Lemma \ref{lem:first-time}, the degrees of maps
$$g_n:=f^{r_n}:P_{n+r_n}(z)\to P_n(x)\tu{ and }h_n:=f^{s_n}: Q_n\to U$$
are bounded by $d^\kappa$. Let $U_n$ be the component of $g_n^{-1}(Q_n)$ containing the point $z$. Then the composition
$$h_n\circ g_n=f^{r_n+s_n}:U_n\to U$$
has bounded degree $d^{2\kappa}$. It remains to show that $r_n\to\infty$ as $n\to\infty$. It is equivalent to prove that $f^{r_n}(z)$ is disjoint from $e(x):=\bigcap P_k(x)$ for all $n$. For otherwise, $f^{s_n}(e(x))=e(y)\subset U$ and so $f^{s_n}(x)\in U$, contradicting \eqref{eq:U}.

In the case that $x=\infty$, we choose a sequence of unbounded puzzle pieces $P_n:=P_{n, i_n}(\infty)\in\mc{P}_n(\infty)$, such that $\#P_n\cap \tu{orb}(z)=\infty$. As above, let $r_n$ be the first entry time of $z$ into $P_n$ and $s_n$ the first entry time of $f^{r_n}(z)$ into $U$. Then we obtain puzzle pieces $Q_n, U_n$ and two maps $g_n, h_n$ exactly as above such that the degree of the composition $h_n\circ g_n=f^{r_n+s_n}: U_n\to U$ is bounded by $d^{2\kappa}$ for each $n\geq n_0$. By similar arguments we have $r_n\to\infty$ as $n\to\infty$. This implies the point $z$ satisfies the bounded degree condition. The proof is complete.	
\end{proof}

\begin{lem}\label{lem:nonrecurrent}
	Let $z\in\mc{J}_f$ be a wandering point. If there exists a wandering point $x\in\omega_{\tu{comb}}(z)$ such that $\omega_\tu{comb}(x)\neq \omega_{\tu{comb}}(z)$, then the point $z$ satisfies the bounded degree condition.
\end{lem}
\begin{proof}
	By Lemma \ref{lem:preperiodic} we may assume all points in $\omega_{\tu{comb}}(z)$ are wandering. By condition, $\omega_{\tu{comb}}(x)\subset \omega_{\tu{comb}}(z)$. We take a wandering point $y\in\omega_{\tu{comb}}(z)\setminus\omega_{\tu{comb}}(x)$. Then by Lemma \ref{lem:comb}\,(2) there exists a puzzle piece $U:=P_{n_0}(y)$ such that	
	$U\cap \tu{orb}(f^{n_0}(x))=\es$. Since $\omega_{\tu{comb}}(x)=\omega_{\tu{comb}}(f(x))$ and $f(x)\in\omega_{\tu{comb}}(z)$, for simplicity we may replace the point $x$ by $f^{n_0}(x)$ and assume $U$ satisfies \eqref{eq:U}.
	
	Now the proof goes similarly as Lemma \ref{lem:preperiodic}. Let $r_n$ be the first entry time of the point $z$ into $P_n(x), n\geq n_0$. Let $s_n$ be the first entry time of $f^{r_n}(z)$ into the puzzle $U$. Then we obtain a map $h_n\circ g_n=f^{r_n+s_n}$ from a puzzle piece $U_n$ (containing the point $z$) to the fixed puzzle $U$ with its degree less than $d^{2\kappa}$ for each $n\geq n_0$. Moreover, $r_n\to\infty$ as $n\to\infty$. This implies that $z$ satisfies the bounded degree condition. 
\end{proof}

We denote by $\omega\tu{Crit}(z):=\tu{crit}(f)\cap \omega_{\tu{comb}}(z)$, i.e., the set of all critical points in $\omega_{\tu{comb}}(z)$. 
\begin{lem}\label{lem:wcrit}
	Let $z\in\mc{J}_f$ be a wandering point. If $\omega\tu{Crit}(z)=\emptyset$, then 
 the point $z$ satisfies the bounded degree condition. 
\end{lem}
\begin{proof}
 Without loss of generality, we may assume $\tu{orb}(z)\cap \tu{crit}(f)=\es$. By condition, there exists an integer $n_0$ such that $P_{n_0}(c)\cap \tu{orb}(z)=\emptyset$ for all $c\in\tu{crit}(f)$. The nested property of puzzle pieces implies that
	$$P_{n+n_0}(z), P_{n+n_0-1}(f(z)), \cdots, P_{n_0}(f^{n}(z)), n\geq 1,$$ are disjoint from $P_{n_0}(c), c\in \tu{crit}(f)$. Let $U$ be a puzzle piece of depth $n_0$ such that $f^{n_k}(z)\in U$ with $n_k\to\infty$ as $k\to \infty$.
	Then the maps: $f^{n_k}:P_{n_0+n_k}(z)\to U$ are conformal.
 This implies that $z$ satisfies the bounded degree condition.
\end{proof}

Let $c\in\mc{J}_f$ be a wandering critical point. The puzzle piece $P_{n+k}(c)$ is called a \emph{successor} of $P_n(c)$ if $f^k(P_{n+k}(c))=P_n(c)$ and the itinerary
$$P_{n+k}(c), f(P_{n+k}(c)), \cdots, f^k(P_{n+k}(c))=P_{n}(c)$$
	meets each critical point at most twice. Clearly the degree $f^k:P_{n+k}(c)\to P_n(c)$ is bounded by $d^{2\kappa-1}$.

\begin{lem}\label{lem:successors}
	Let $z\in\mc{J}_f$ be a wandering point. If there exists a critical point $c\in\omega\tu{Crit}(z)$ and an integer $n_0$ such that $P_{n_0}(c)$ has infinitely many successors, then the point $z$ satisfies the bounded degree condition.
\end{lem}
\begin{proof}
	If $P_{n_0}(c)$ has infinitely many successors $P_{n_0+k_n}(c)$ with $k_n\to\infty$ as $n\to\infty$, then the degrees of maps
	$$f^{k_n}:P_{n_0+k_n}(c)\to P_{n_0}(c)$$
	are bounded by $d^{2\kappa-1}$ by definition. Let $r_n$ be the first entry time of $z$ into $P_{n_0+k_n}(c)$ and $V_n$ be the component of $f^{-r_n}(P_{n_0+k_n})$ containing the point $z$. Then the composition $f^{k_n}\circ f^{r_n}: V_n\to P_{n_0}(c)$ has degree less than $d^{3\kappa-1}$. The proof is complete.
\end{proof}

By Lemmas \ref{lem:preperiodic}, \ref{lem:nonrecurrent}, \ref{lem:wcrit} and \ref{lem:successors}, we can restrict ourself on the set $X_f$, which consists of wandering points $z\in\mc{J}_f$ satisfying
\begin{itemize}
	\item[(1)] $\omega\tu{Crit}(z)\neq\es$;
	\item[(2)] any point $x$ in $\omega_{\tu{comb}}(z)$ are wandering and $\omega_{\tu{comb}}(x)=\omega_{\tu{comb}}(z)$;
	\item[(3)] $P_n(c)$ has only finitely many successors for any $c\in\omega\tu{Crit}(z)$ and $n\geq 0$.
\end{itemize}
The critical points in the above set $\omega\tu{Crit}(z)$ are called \emph{persistently recurrent}. The following corollary is an immediate consequence of Lemmas \ref{lem:elevator},  \ref{lem:preperiodic}, \ref{lem:nonrecurrent}, \ref{lem:wcrit} and \ref{lem:successors}.
\begin{cor}\label{cor:mes}
	Let $f$ be a non-renormalizable Newton map. Then every wandering point in $\mc{J}_f\setminus X_f$ satisfies the elevator condition.
\end{cor}

\section{On persistently recurrent critical points}\label{sec:recurrent}
We do not know whether a persistently recurrent critical point satisfies the bounded degree condition. However, in this section we shall construct a sequence of nested puzzle pieces $(K_n^-, K_n, K_n^+)$, called \emph{enhanced nest}, around the point $c$ such that these puzzle pieces satisfy certain bounded degree condition and their associated annuli have uniformly positive lower moduli; see Corollary \ref{cor:nest} and Theorem \ref{thm:modulus}. This construction relies on well developed theories on peristently recurrent critical points; see \cite{KSS,KS,TLP,QY}.

Throughout this section, we assume $f$ is a non-renormalizable Newton map, and fix a point $z\in X_f$ and a critical point $c\in \omega\tu{Crit}(z)$. Let us denote by $\mathcal{PC}(z):=\ol{\bigcup_{c\in \omega\tu{Crit}(z)}\tu{orb}(c)}$. 

In \cite{KSS} the authors introduced two operators, written as $\mc{A}$ and $\mc{B}$, acting on critical puzzle pieces and producing new critical puzzle pieces. These puzzle pieces have ``bounded degree" and their associated annuli avoid the set $\mathcal{PC}(z)$. Precisely, 

\begin{proposition}\label{p:pteAB} Let $I$ be a puzzle piece containing the point $c$. Then $\mathcal{A}(I)$ and $\mathcal{B}(I)$ are  puzzle pieces with the following properties\,:
	
	\begin{enumerate}
		\item [(1)] $c\in\AA(I)\subset \BB(I)\subset I$ and $\BB(I)\setminus \ol{\AA(I)}$ is disjoint from the set $\mathcal{PC}(z)$\,;
		\item [(2)] $f^{a}(\AA(I))=I$ and $f^{b}(\BB(I))=I$ for some integers $a=a(I)$ and $b=b(I)$;
		\item [(3)] the degree of  $  f^{a}:\AA(I)\to I $ is less than $d^{\kappa^2+\kappa}$.
		\item[(4)] the degree of $ f^{b}:\BB(I)\to I $ is less than $d^{\kappa^2}$;
	\end{enumerate}
\end{proposition}
Recall that $\kappa$ is the number of critical points in $\mc{J}_f$. We refer to~\cite[pp.802-805]{KSS} or \cite[pp.52-54]{QY} or \cite[Lemma 4.4]{TLP} for the proof of Proposition \ref{p:pteAB}.

By condition, each puzzle piece $I$ containing the point $c$ has at most finitely many successors. Let $\mc{D}(I)$ be the last successor of $I$. Recall that $L_c(I)$ is the unique puzzle piece containing $c$ such that $f^{r}(L_c(I))=I$ with $r:=r_c(I)$ the first entry time of $c$ into $I$. Clearly $L_c(I)$ is a successor of $I$.
An important fact is that
\begin{lem}\cite[Appendix A]{TLP}\label{lem:two}
	Every puzzle piece containing the point $c$ has at least two successors.
\end{lem}

The puzzle pieces $K_n^-$ and $K_n$ around $c$ are defined inductively by $K_n^-:=\mathcal{A}\mathcal{A}\mathcal{D}^{\tau}(K_{n-1})$ and $K_n:=\mathcal{B}\mathcal{A}\mathcal{D}^{\tau}(K_{n-1})$, starting from  $K_0:=Q$. Here $Q$ is a puzzle piece compactly nested in $P_0(c)$ by Lemma \ref{lem:nest-finite}, and $\tau$ is a large integer. Actually taking $\tau=\kappa+2$ is enough.

Assume $f^{q_n}:K_n\to \mathcal{A}\mathcal{D}^{\tau}(K_{n-1})$ for some integer $q_n>0$. Then $K_n^+$ is defined as the component of $f^{-q_n}(\mathcal{B}\mathcal{D}^{\tau}(K_{n-1}))$ containing the point $c$.

By definition, the corollary follows immediately.
\begin{cor}\label{cor:nest}
	
	The puzzle pieces $(K_n^-,K_n,K'_n)$ have the following properties\,:
	
	\begin{enumerate}\item [(1)] $c\in K_n^-\subset K_n\subset K^+_n\subset K_{n-1}^-$;
		\item[(2)] $f^{p_n^-}(K_n^-)=K_{n-1}$,  $f^{p_n}(K_n)=K_{n-1}$, $f^{p^+_n}(K^+_n)=K_{n-1}$ for some $p_n^-, p_n$ and $p_n^+$;
		\item[(3)] $\tu{deg}(f^{p_n}|_{K_n})\le \delta=\delta(d,\kappa)$;
		\item[(4)] $K_n^+\setminus\ol{K_n}$  and $K_n\setminus\ol{K_n^-}$ are  annuli  (possibly degenerated) disjoint from the set $\mathcal{PC}(z)$.
	\end{enumerate}
\end{cor}

Let $I$ be a puzzle piece. The \emph{first return time} of $I$ is the minimal integer $r\geq 1$ such that $f^r(I)\cap I\neq \es$, or equivalently, $r$ is the smallest integer such that $I\cap f^{-r}(I)\neq \es$. We denote this $r$ by $r(I)$. This notation is, in spirit, same as the first entry time for points. Since the iterated image $f^n(I)$ will eventually cover the sphere $\olC$, the number $r(I)$ exists. 

\begin{lem}[Properties of the first return for puzzles]\label{lem:property}
	Let $I'\subset I$ be two puzzle pieces. Then
	\begin{enumerate}
		\item $r(I)\leq r(I')$;
		\item If $f^s(I')=I$ for some $s>0$, then $s\geq r(I')$ and the equality holds when $I'$ is a successor of $I$.
		\item $r(\mc{D}(I))\geq 2 r(I)$.
	\end{enumerate}
\end{lem}
\begin{proof}
	Statements (1) and (2) hold by definition. For (3), assume $I'=\mc{D}(I)$ and $f^s(I')=I$. Let $s_0$ be the first entry time of $c$ into $I$. By Lemma \ref{lem:two}, $s>s_0$ and $c\in I'\subset L_c(I)$. Then $f^{s_0}(I')\subset f^{s_0}(L_c(I))=I$. By statements (1) and (2), we have
	$$r(I')=s_0+(s-s_0)\geq r(L_c(I))+r(f^{s_0}(I'))\geq 2r(I).$$ 
	The proof is complete.
\end{proof}
We denote by $h(I', I)$ the difference of the depths of puzzle pieces $I'$ and $I$.
\begin{cor}\label{cor:infinity}
	$\lim\limits_{n\to \infty}h(K_n^+, K_n)=\lim\limits_{n\to \infty}h(K_n, K^-_n)=\infty$.
\end{cor}
\begin{proof}
	Note that $h(K^+_n,K_n)=p_n-p^+_n=h(K_{n-1},f^{p_n^+}(K_n))\geq r(f^{p_n^+}(K_n))\geq r(K_{n-1})$ by Corollary \ref{cor:nest} and Lemma \ref{lem:property}. Similarly, $h(K_n,K_n^-)\geq r(K_{n-1})$. On the other hand, $r(K_n)\geq r(\mc{D}^\tau (K_{n-1})\geq 2^\tau r(K_{n-1})\geq (2^{\tau })^nr(K_0)$. Thus the lemma follows.	
\end{proof}

\begin{lem}\label{l:Knondeg} The annuli $K^+_n\setminus\ol{K_n}$ and $K_n\setminus\ol{K_n^-}$ are non-degenerated for all large enough $n$.
\end{lem}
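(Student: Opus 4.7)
The plan is to promote the ``possibly degenerate'' annuli of Proposition~\ref{pro:nest}(4) to genuinely non-degenerate ones by combining Proposition~\ref{pro:nest}(5) with the shrinking Lemma~\ref{lem:find annulus}. Fix a critical point $c\in\omega Crit(z)$; by construction every piece of the enhanced nest is a critical puzzle piece around $c$, hence coincides with $P_k(c)$ at its own depth $k$. Since $c$ is persistently recurrent, it is wandering and not in $\Omega_f$, so Lemma~\ref{lem:find annulus} applies to $c$: for every level $k$ there is an integer $N_k\ge 1$ with $P_{k+N_k}(c)\Subset P_k(c)$.

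I would first reduce both non-degeneracy statements to a model compact containment at the level of the operators $\AA$ and $\BB$. Indeed, the pair $(K_n,K'_n)$ is a proper pullback of $(\AA(M_n),\BB(M_n))$, with $M_n:=\DD^{b+1}(K_{n-1})$, via $f^{q_n}\from K'_n\to\BB(M_n)$; Proposition~\ref{p:pteAB}(1) guarantees that the collar $\BB(M_n)\setminus\AA(M_n)$ is disjoint from $\mathcal{PC}(z)$, so $f^{q_n}$ has the same degree on $K_n$ as on $K'_n$. Compact containment is then inherited by the pullback: if $\AA(M_n)\Subset\BB(M_n)$, then $\partial\AA(M_n)\cap\partial\BB(M_n)=\emptyset$, and since $\partial K_n$ maps into $\partial\AA(M_n)$ while $\partial K'_n$ maps into $\partial\BB(M_n)$, no boundary point is shared, so $K_n\Subset K'_n$. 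The pair $(K_n^-,K_n)$ is handled identically by taking $L_n:=\AA(M_n)$ and reducing to $\AA(L_n)\Subset\BB(L_n)$.

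It therefore suffices to verify $\AA(I)\Subset\BB(I)$ for $I\in\{L_n,M_n\}$ whenever $n$ is large. Both $\AA(I)$ and $\BB(I)$ are critical puzzles around $c$, of depths $\alpha(I)>\beta(I)$ satisfying $\alpha(I)-\beta(I)\to\infty$ as $\mathrm{depth}(I)\to\infty$, an iterated consequence of Proposition~\ref{p:pteAB} together with the fact that $\mathrm{depth}(K_{n-1})\to\infty$. Once $\alpha(I)-\beta(I)\ge N_{\beta(I)}$ we obtain
\[
\AA(I)=P_{\alpha(I)}(c)\;\subseteq\; P_{\beta(I)+N_{\beta(I)}}(c)\;\Subset\; P_{\beta(I)}(c)=\BB(I),
\]
which is exactly the required model containment.

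The main obstacle is the uniformity of $N_k$: Lemma~\ref{lem:find annulus} provides $N_k$ only pointwise, and a priori $N_k$ could grow faster than the depth gaps furnished by the enhanced nest. To sidestep this I would argue by contradiction: if one of the annuli were degenerate for infinitely many $n$, pick common boundary points $\xi_n\in\partial K_n\cap\partial K'_n$ and study the accumulation of $f^{q_n}(\xi_n)\in\partial\AA(M_n)\cap\partial\BB(M_n)$. Running the same mechanism as in the proofs of Proposition~\ref{pro:infty} and Lemma~\ref{lem:find annulus}, any such accumulation point is forced to lie in $\Omega_f$; pulling this information back along the enhanced nest then yields $c\in\Omega_f$, contradicting the wandering of $c$.
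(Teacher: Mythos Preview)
Your reduction to showing $\AA(I)\Subset\BB(I)$ for $I\in\{L_n,M_n\}$ is sound, and the depth gap $\alpha(I)-\beta(I)\to\infty$ is indeed Proposition~\ref{pro:nest}(5) (not Proposition~\ref{p:pteAB}, which says nothing about depths). The genuine gap is exactly the uniformity problem you flag and do not resolve. Lemma~\ref{lem:find annulus} supplies only $P_{\beta(I)+N_{\beta(I)}}(c)\Subset P_{\beta(I)}(c)$ with $N_{\beta(I)}$ depending on the level $\beta(I)$; since $\beta(I)\to\infty$ along with $\alpha(I)-\beta(I)$, there is no a~priori reason for $\alpha(I)-\beta(I)\ge N_{\beta(I)}$ ever to hold. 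Your fallback contradiction sketch does not close this: the engine behind Proposition~\ref{pro:infty} and Lemma~\ref{lem:find annulus} is a \emph{fixed} nested tower $\{P_{k_0+l}(z)\}_l$ whose decreasing boundary intersections pin down a point of $\Omega_f$. Here the pairs $(\AA(M_n),\BB(M_n))$ sit at unrelated and diverging depths, the points $\xi_n$ need not lie in any common nested tower, and there is no mechanism forcing an accumulation point into $\Omega_f$, let alone one that ``pulls back'' to give $c\in\Omega_f$.

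The paper bypasses the uniformity issue entirely by anchoring everything at a \emph{fixed} depth. By construction $K_0=Q\Subset P_0(c)$, and $f^{t_n}(K_n)=K_0$ for the accumulated iterate $t_n$. Letting $R_n$ be the component of $f^{-t_n}(P_0(c))$ containing $c$, one gets $K_n\Subset R_n$ (pullback of a non-degenerate annulus, via the same boundary argument you used in your reduction) and, crucially,
\[
\mathrm{depth}(K_n)-\mathrm{depth}(R_n)=\mathrm{depth}(Q)-\mathrm{depth}(P_0(c))=\mathrm{depth}(Q),
\]
a constant independent of $n$. Now Proposition~\ref{pro:nest}(5) gives $\mathrm{depth}(K_n)-\mathrm{depth}(K'_n)\to\infty$, so eventually $\mathrm{depth}(K'_n)<\mathrm{depth}(R_n)$, whence $R_n\subseteq K'_n$ and $K'_n\setminus\overline{K_n}\supseteq R_n\setminus\overline{K_n}$ is non-degenerate. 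The same argument works for $K_n\setminus\overline{K_n^-}$. The missing idea in your attempt is precisely this: pull back the single non-degenerate annulus $P_0(c)\setminus\overline{Q}$ rather than trying to manufacture one at the moving level $\BB(I)$.
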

\begin{proof}
	Assume $f^{t_n}(K_n)=K_0$. By the choice of $K_0$, we have $K_0\Subset P_0(c).$ Let $R_n$ be the component of $f^{-t_n}(P_0(c))$ containing $c$. Then $K_n\Subset R_n$. Note that $h(K_n, R_n)$ is a constant $h(K_0, P_0(c))$. By Corollary \ref{cor:infinity}, for a large integer $n$, we have $h(K_n, R_n)<h(K_n, K_n^+)$. Since $R_n$ and $K_n^+$ contains a common point $c$, it holds that $R_n\subset K_n^+$. Thus $K_n\Subset K^+_n$. A similar argument implies that $K_n^-\Subset K_n$ for large integers $n$. The proof is complete.
\end{proof}

The Covering Lemma, due to Kahn and Lyubich, is a very powerful tool\,:
in a degenerated situation on moduli, it tells us the relation of certain (sub-)annuli and the pre-images of these annuli under a holomorphic proper map  $g$, when one has some control on the degrees of $g$ over some disks.
We state the lemma now and refer the reader to~\cite{KL1}  for the proof.

\begin{lem}[The Kahn-Lyubich Covering Lemma]\label{lem:KL}
	Let $g:(A, A', U)\to (B, B',V)$ be a degree $\delta$ proper holomorphic map with the following properties:
	\begin{itemize}
		\item[(1)]
		$A\Subset A'\Subset U$ and $B\Subset B'\Subset V$
		are all topological disks;
		\item[(2)]  $g: A\to B$ and $g: A'\to B'$ are proper with degrees less than $D(\leq \delta)$;
		\item[(3)]  $\textup{mod}(B'\setminus \ol{B})\ge \eta \ \textup{mod}(U\setminus \ol{A})$.
	\end{itemize}
	Then there exists a number
	$\epsilon=\epsilon(\eta,\delta)>0$ such that if $\textup{mod}(U\setminus \ol{A})<\epsilon$, we have
	\begin{equation}\label{eq:kL}
	\textup{mod}(U\setminus\ol{A})\geq \frac \eta{2D^2}\  \textup{mod}(V\setminus\ol{B}).
	\end{equation}
\end{lem}

\begin{thm}[Complex bounds]\label{thm:modulus}
	There exists a constant $m>0$ such that $$\tu{mod}(K_n^+\setminus\ol{K_n})\geq m\tu{ and }\tu{mod}(K_n\setminus\ol{K_n^-})\geq m.$$
	for sufficiently large integer $n$.
\end{thm}
The proof of Theorem \ref{thm:modulus} makes essential use of Lemma \ref{lem:KL}. The approach is quite technique. We recommend \cite[Section 4.6]{TLP} for details; see also
\cite[Section 4]{QY}.


\section{Proofs of Theorem \ref{th:NILF} and Theorem \ref{th:conjugate}}\label{sec:proofs}


Let $g: W\to \mb{C}$ be a complex function defined on a domain $W\subset \mb{C}$. We say $g$ is almost continuous at a point $x\in W$ if for any $\varepsilon>0$,
	\begin{equation}\label{eq:round}
	\lim_{r\to 0^+}\frac{\tu{mes}(\{z\in D(x,r):|g(z)-g(x)|>\varepsilon\})}{\tu{mes}(D(x,r))}=0.
	\end{equation}

It is clear that, if $g$ is almost continuous at $x$, then for any sequence of topological disks $\{U_n\}$ containing $x$ with
$\lim\limits_{n\to \infty}\tu{diam}(U_n)=0$ and $\tu{Shape}(U_n,x)\leq M$ for some constant $M>0$,
The equality \eqref{eq:round} still holds, when replace the round disk $D(x, r)$ by $U_n$.

Combining with Lusin's Theorem and the Lebesgue Density Theorem, one can prove that a measurable function
is almost continuous at almost every point.
\begin{lem}\label{lem:almost}
	A measurable function is almost continuous at each point except a set of measure zero.
\end{lem}
\begin{proof}
	Let $g$ be a complex measurable function defined on a domain $W\subseteq \mathbb{C}$. For any integer $n>0$, by Lusin's Theorem, there exists
	a subset $E_{n}$ of $W$ such that the restriction $g:E_n\to \mb{C}$ is continuous and $\tu{mes}(W \setminus E_{n})<1/n$.
	Let $\widetilde{E}_{n}$ be the set of Lebesgue density points in $E_{n}$. From the Lebesgue Density Theorem,
	$\textrm{mes}(E_{n}\setminus  \widetilde{E}_{n})=0$. By definition, 
	 $g$ is almost continuous at
	each point in $\widetilde{E}_{n}$. Let $E_\infty=\bigcup_{n}\wt{E}_n$. Then
	$\textrm{mes}(W \setminus E_\infty)=0$ and $g$ is almost continuous at
	each point in $E_\infty$.
\end{proof}

Let $\mc{H}(f)$ be the collection of all holomorphic maps $h:U\to V$, where $U$ and $V$ are open sets such that there exist integers $i, j>0$ such that $f^i\circ h=f^j$ on $U$.

The following proposition gives us some conditions on the dynamics of $f$ near a point $z$, under which any $f$-invariant line field $\mu$ either vanishes at the point $z$ or is not almost continuous at $z$.
\begin{prop}\cite[Proposition 3.2]{Shen}\label{prop:shen} Let $f$ be a rational function of degree $\geq 2$ and $z$ be a point in $\mc{J}_f$. If there is a positive constant $C>1$ and a positive integer $N\geq 2$ and a sequence $h_n:U_n\to V_n$ in $\mc{H}(f)$ with the following properties:
	\begin{itemize}
		\item[(1)] $U_n$ and $V_n$ are topological disks containing the point $z$ with
		$$\tu{diam}\,U_n\to 0\tu{ and }\tu{diam}\,V_n\to 0$$ 
		as $n\to \infty$;
		\item[(2)] $h_n$ is a proper map of degree between 2 and $N$;
		\item[(3)] for some $u\in U_n$ such that $h_n'(u)=0$ and for $v=h_n(u)$ we have
		$$\tu{Shape}(U_n, u)\leq C\tu{ and }\tu{Shape}(V_n, v)\leq C;$$
		\item[(4)] $\tu{Shape}(U_n, z)\leq C\tu{ and }\tu{Shape}(V_n, z)\leq C.$
	\end{itemize}
	Then for any $f$-invariant line field $\mu$, either $\mu(z)=0$ or $\mu$ is not almost continuous at the point $z$. 
\end{prop}

Recall that $X_f$ is the set of points $z\in\mc{J}_f$ such that $\omega_{\tu{comb}}(z)$ contains a persistently recurrent critical point.
\begin{lem}\label{lem:density}
	Let $z$ be a wandering point in $\mc{J}_f\setminus X_f$. Then the point $z$ cannot be a Lebesgue density point in $\mc{J}_f$. Moreover, $\bigcap P_n(z)=\{z\}$.
\end{lem}
\begin{proof}
	By a M\"{o}bius conjugacy, one may assume $\mc{J}_f$ is a compact set in $\mb{C}$.
	By definition, it suffices to construct topological disks $E_k$ and $U_k$ such that 
	\begin{enumerate}
		\item $z\in U_k$ and $E_k\subset U_k\cap \mc{F}_f$; recall that $\mc{F}_f$ is the Fatou set of $f$;
		\item for some $x_k\in E_k$ and constant $C$ independent on $k$,
		$$\tu{Shape}(z, U_k)\leq C,~ \tu{Shape}(x_k, E_k)\leq C\tu{ and }\tu{diam}(U_k)\leq C\cdot\tu{diam}(E_k);$$
		\item $\tu{diam}(U_k)\to 0$ as $k\to\infty$.
	\end{enumerate}
	By Corollary \ref{cor:mes}, let $(U, V, n_k)$ be an elevator for the point $z$. Take a round disk $E:=D(x, r_0)\subset U\cap \mc{F}_f$. Consider the map
	$$g_k:=f^{n_k}:(E_k, U_k, V_k)\to (E, U, V),$$
	where $U_k$ and $V_k$ are the components of $f^{-n_k}(U)$ and $f^{-n_k}(V)$ containing the point $z$, respectively, and $E_k$ is a component of $f^{-n_k}(E)$ nested in $U_k$. Let $x_k$ be a point in $E_k\cap f^{-n_k}(x)$. Since the degrees of $g_k$ are uniformly upper bounded by a constant $\delta$, condition (2) follows by Lemma \ref{2:distortionlemma}. To see condition (3), the infinite intersection $\bigcap U_k$ is enclosed by the annuli $H_k:=V_k\setminus \ol{U_k}$. By passing to a subsequence, we may assume $H_k$ are pairwise disjoint. By Lemma \ref{lem:moduli}, we have $$\tu{mod}(H_k)\geq \delta^{-1}\cdot\tu{mod}(V\setminus\ol{U}).$$ The Gr\"{o}tzsch inequality implies  that $\tu{mod}(V_0\setminus\bigcap \ol{U_k})=\infty$. Thus $\bigcap\ol{U_k}$ is a singleton. We have $\bigcap P_n(z)=\bigcap \ol{U_k}=\{z\}$. Since the three conditions are all satisfied for $(E_k, U_k, V_k)$, the proof is complete.
\end{proof}

Recall that in Section \ref{sec:recurrent} for every persistently recurrent critical point $c\in \omega_{\tu{comb}}(z)$, we have constructed a sequence of  well behaved puzzle pieces $(K^-_n, K_n, K_n^+)$ around $c$. Actually one can transfer these puzzle pieces into those that contains the point $z$. That is:
\begin{lem}\label{lem:shape}
	Let $c\in\omega_{\tu{comb}}(z)$ be a persistently recurrent critical point for a point $z\in X_f$. Let $\{(K_n^-, K_n, K_n^+)\}$ be the puzzle pieces constructed in Section \ref{sec:recurrent}. Then 
	\begin{enumerate}
		\item there exists a constant $M=M(d,\kappa)$ such that $\tu{Shape}(K_n, c)\leq M$;
		\item let $l_n>0$ be the first entry time of the point $z$ into $K_n^-$. Let $(V_n^-, V_n, V_n^+)$ be the components of $f^{-l_n}(K_n^-, K_n, K_n^+)$ containing the point $z$, respectively. Then there exists a constant $M_1=M_1(d, \kappa)$ and $M_2=M_2(d, \kappa)$ such that $$\tu{mod}\,(V_n^+\setminus \ol{V_n})\geq M_1\tu{ and }\tu{Shape}(V_n, y)\leq M_2\  \forall y\in V_n^-.$$
		\item $\bigcap P_n(z)=\{z\}$.
	\end{enumerate}

\end{lem}
\begin{proof}
	(1) Since $K_{n-1}^+\sm \ol{K_{n-1}}$ and $K_{n-1}\sm \ol{K_{n-1}^-}$ are disjoint from the set $\mc{PC}(z)$, it follows that $f^{p_n}(c)\in K_{n-1}^-$. Moreover, we have
	$$2\leq \tu{deg}(f^{p_n}|_{A^-_n})=\tu{deg}(f^{p_n}|_{K_n})=\tu{deg}(f^{p_n}|_{A^+_n})\leq D,$$
	where $A^-_n$ and $A^+_n$ are the components of $f^{-p_n}(K_{n-1}^-)$ and $f^{-p_n}(K_{n-1}^+)$ containing the point $c$, respectively. By the Koebe distortion Theorem for univalent function, Lemmas \ref{4:distortionlemma}, \ref{3:distortionlemma} and Theorem \ref{thm:modulus}, we have
	\begin{equation}\label{eq:less}
	\tu{Shape}(K_n, c)\leq C\cdot \tu{Shape}(K_{n-1}, c)^{\frac{1}{2}}
	\end{equation} 
	for some constant $C=M(d, \kappa)$. We repeatedly apply the inequality \eqref{eq:less} as follows
	$$\tu{Shape}(K_n, c)\leq C\cdot C^{\frac{1}{2}}\cdot \tu{Shape}(K_{n-2}, c)^{\frac{1}{2^2}}\leq \cdots\leq C^{1+\frac{1}{2}+\cdots+\frac{1}{2^{n-1}}}\tu{Shape}(K_0, c)^{\frac{1}{2^n}}\leq M$$
	with $M:=C^2\cdot \tu{Shape}(K_0, c)$. The proof of statement (1) is complete.
	
	(2) By Lemma \ref{lem:first-time} and the condition that $$(K_n\sm\ol{K_n^-})\cap\mc{PC}(z)=\es\tu{ and }(K_n^+\sm\ol{K_n})\cap\mc{PC}(z)=\es$$ in Corollary \ref{cor:nest}, we have $\tu{deg}(f^{l_n}|_{V^-_n})=\tu{deg}(f^{l_n}|_{V_n})=\tu{deg}(f^{l_n}|_{V^+_n})\leq d^\kappa$. Then by Lemmas \ref{lem:moduli}, \ref{4:distortionlemma}, \ref{2:distortionlemma}\,(1), \ref{lem:shape}\,(1) and Theorem \ref{thm:modulus}\,(4), statement (2) follows directly. 
	
	(3) It follows by statement (2) and the Gr\"{o}tzsch inequality.
\end{proof}

\subsection{Proof of Theorem~\ref{th:NILF}}

\begin{proof}[Proof of Theorem \ref{th:NILF}]
	We argue by contradiction and assume $\mu$ is an $f$-invariant line field with its support $E:=\tu{supp}(\mu)$ contained in $\mc{J}_f$. Then by Lemma \ref{lem:density} we have
	$$0<\tu{mes}(E)=\tu{mes}(E\cap \wt{X}_f)+\tu{mes}(E\cap(\mc{J}_f\sm\wt{X}_f))=\tu{mes}(E\cap \wt{X}_f),$$
	where $\wt{X}_f\subseteq X_f$ is the set of points in $X_f$ whose orbits are disjoint from $\tu{crit}(f)$. In what follows, we shall show that $\mu$ is not almost continuous at every point $z\in E\cap \wt{X}_f.$ Then by Lemma \ref{lem:almost}, we reach a contradiction.
	
	It suffices to construct maps $h_n: U_n\to V_n$ in $\mc{H}_f$ at the point $z$ such that $h_n$ satisfy the conditions in Proposition \ref{prop:shen}.
	
	Let $(V_n^-, V_n)$ be the puzzle pieces in Lemma \ref{lem:shape}\,(2). 
	Take a minimal and positive integer $v_n$ such that $f^{v_n}(V_n^-)$ covers a critical point, say $c'$. Since $\tu{orb}(z)\cap\tu{crit}(f)=\es$ and $\bigcap P_k(z)=\{z\}$, we have $c'\in\omega\tu{Crit}(z)$. Clearly $v_n\leq l_n$ by Lemma \ref{lem:shape}\,(2). Let $(\Lambda_n^-, \Lambda_n):=f^{v_n}(V^-_n, V_n)$. Note that $f^{v_n}:V_n\to \Lambda_n$ is conformal, as $(K_n\setminus\ol{K_n^-})\cap\mc{PC}(z)=\es$. Let $g_n:\Lambda_n\to V_n$ be the inverse of $f^{v_n}$.
	
	Let $j_n>0$ be the first entry time of $c'$ into $\Lambda_n^-$ and $(\Gamma_n^-, \Gamma_n)$ be the components of $f^{-j_n}(\Lambda_n^-, \Lambda_n)$ containing $c'$, respectively. Similarly, let $u_n>0$ be the first entry time of the point $z$ into $\Gamma_n^-$ and let $(U_n^-, U_n)$ be the components of $f^{-u_n}(\Gamma_n^{-}, \Gamma_n)$ containing the point $z$, respectively. Since $c'\in\omega_{\tu{comb}}(c')$ and $c'\in\omega_{\tu{comb}}(z)$, such numbers $j_n$ and $u_n$ exist.
	
Now consider the composition $h_n:=g_n\circ f^{j_n}\circ f^{u_n}:(U_n^-, U_n)\to (V_n^-, V_n)$. Clearly a point $u$ in the finite set $f^{-u_n}(c')\cap U_n^-$ is a critical point of $h_n$. Moreover, the degree of $h_n$ is less than a constant $\delta=\delta(d,\kappa)$ by Lemma \ref{lem:first-time}. Thus condition (2) of Proposition \ref{prop:shen} holds for $h_n$. By Lemmas \ref{4:distortionlemma} and \ref{2:distortionlemma}, it follows that conditions (3) and (4) of Proposition \ref{prop:shen} hold as well. 

For the shrinking of $\tu{diam}(U_n)$ and $\tu{diam}(V_n)$, note that the depths of puzzle pieces $U_n$ and $V_n$ go to $\infty$ as $n\to\infty$. Then by Lemma \ref{lem:shape}\,(3) it follows.

We have shown that all conditions in Proposition \ref{prop:shen} are satisfied for $h_n$. Hence $\mu$ is not almost continuous at the point $z\in E\cap\wt{X}_f$. The proof is complete.
\end{proof}

\subsection{Proof of Theorem~\ref{th:conjugate}}

	Let $\varphi: W\to \widetilde{W}$ be an orientation-preserving homeomorphism between two open sets in $\mathbb{C}$. For a point $x\in W$,
	let
	$$\underline{H}(\varphi,x)=\liminf_{r\to 0^+}\,\tu{Shape}(\varphi(D(x, r)),\varphi(x))\in [1,\infty].$$
Following \cite{HK}, we say $\varphi$ is a $K$-\emph{quasiconformal mapping} if there exists $1\leq H<\infty$ such that $\underline{H}(\varphi,x)\leq H$ for all $x\in W$, where $K\geq 1$ is a constant depending on $H$.

A topological disk $V\subset \mb{C}$ has {\it $M$-Shape} if there is a point $x\in V$ such that ${\rm Shape}(V,x)\leq M$.

In order to prove Theorem~\ref{th:conjugate}, we need the following QC-Criterion, which is due to Kozlovski-Shen-van Strien:

\begin{lem}\cite[Lemma 12.1]{KSS}\label{l:qc-criterion}
	Let $H>1$, $M>1$ and $m>0$  be constants, $\varphi: W\to \widetilde{W}$ be an orientation-preserving homeomorphism between two Jordan domains $W$ and $\wt{W}$ which extends continuously to the boundary. Let $X_0$ and $X_1$ be disjoint subsets of $W$ such that ${\rm mes}(X_0)={\rm mes}(\varphi(X_0))=0$. Assume that the following hold
	\begin{enumerate}
		\item[(1)] for each $x\in W\setminus (X_0\cup X_1),\,\underline{H}(\varphi,x)<H$;
		\item[(2)] for each $x\in X_0,\,\underline{H}(\varphi,x)<\infty$;
		\item[(3)] there exists a family $\mathcal{V}$ of pairwise disjoint topological disks in $W$ which form a covering of $X_1$, such that for each $V\in \mathcal{V}$, we have
		\begin{itemize}
			\item both $V$ and $\varphi(V)$ have $M$-Shape;
			\item ${\rm mod}(W\setminus \overline{V})>m$ and ${\rm mod}(\widetilde{W}\setminus \overline{\varphi (V)})>m$.
		\end{itemize}
	\end{enumerate}
	Then there exists a $K$-quasiconformal mapping $\psi: W\to \widetilde{W}$ such that $\psi|_{\partial W}=\varphi|_{\partial W}$, where $K>1$ is a constant depending only on $H,\,M$ and $m$.
\end{lem}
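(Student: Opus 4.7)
The plan is to replace $\varphi$ on each disk of the covering $\mathcal{V}$ by a quasiconformal map with the same boundary values, producing a homeomorphism $\psi$ that agrees with $\varphi$ on $\partial W$ (the statement's $U$ being a typo for $W$) and whose distortion is uniformly bounded. Thus the bad set $X_1$ is absorbed into finitely many ``QC patches'', while $X_0$ remains a negligible exceptional set.

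\textbf{Step 1 (Local QC replacement).} For each $V\in\mathcal{V}$, I build a $K_1$-quasiconformal map $\psi_V:V\to\varphi(V)$ such that $\psi_V|_{\partial V}=\varphi|_{\partial V}$, with $K_1=K_1(M,m)$. Let $\Phi:V\to\mathbb{D}$ and $\widetilde\Phi:\varphi(V)\to\mathbb{D}$ be Riemann maps normalized at points realizing the $M$-shape. The annular modulus bounds $\mathrm{mod}(W\setminus\overline V)>m$ and $\mathrm{mod}(\widetilde W\setminus\overline{\varphi(V)})>m$ together with Lemma~\ref{2:distortionlemma} and Lemma~\ref{4:distortionlemma} force both $\Phi$ and $\widetilde\Phi$ to extend as uniformly quasisymmetric homeomorphisms of the boundaries. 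Therefore the circle homeomorphism $\widetilde\Phi\circ\varphi\circ\Phi^{-1}:\partial\mathbb D\to\partial\mathbb D$ is quasisymmetric with a constant depending only on $M,m$, and the Beurling--Ahlfors (or Douady--Earle) extension produces a $K_1$-quasiconformal extension to $\mathbb D$ whose conjugate is the desired $\psi_V$.

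\textbf{Step 2 (Gluing).} Define
$$\psi(x)=\begin{cases} \psi_V(x), & x\in V\in\mathcal V, \\ \varphi(x), & x\in W\setminus \bigcup_{V\in\mathcal V}V. \end{cases}$$
Since the disks are pairwise disjoint and $\psi_V=\varphi$ on each $\partial V$, $\psi$ is a homeomorphism $W\to\widetilde W$ with $\psi|_{\partial W}=\varphi|_{\partial W}$. At every $x\in W\setminus\bigcup\overline V$ we have $\psi\equiv\varphi$ near $x$, hence $\underline H(\psi,x)=\underline H(\varphi,x)<H$ for $x$ outside $X_0$ (since $X_1\subseteq\bigcup V$). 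At every $x\in V\in\mathcal V$ the map $\psi=\psi_V$ is $K_1$-QC, so $\underline H(\psi,x)\le H_1$ for some $H_1=H_1(K_1)$. On the negligible set $X_0$, $\underline H(\psi,x)<\infty$. On the union $\bigcup\partial V$, which has measure zero, one only needs the finiteness $\underline H(\psi,x)<\infty$, which follows from the homeomorphism property combined with the uniform distortion on each side.

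\textbf{Step 3 (From metric bound to QC).} The homeomorphism $\psi$ satisfies $\underline H(\psi,\cdot)\le H':=\max(H,H_1)$ almost everywhere in $W$, with exceptional set contained in $X_0\cup\bigcup\partial V$ of Lebesgue measure zero, and $\underline H(\psi,\cdot)<\infty$ at every remaining point. By the metric characterization of planar quasiconformality (the Heinonen--Koskela theorem, or in the planar case Gehring's ``$\liminf$'' version of the analytic definition), $\psi$ is $K$-quasiconformal on $W$ with $K=K(H',M,m)$, i.e.\ depending only on $H,M,m$, as required.

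The main obstacle is Step 3: one must know that a homeomorphism with a $\liminf$-bound on the metric distortion almost everywhere (and a.e.\ finite distortion on an exceptional null set) is genuinely quasiconformal. This is the non-trivial part; once that theorem is invoked, everything else is routine gluing and distortion bookkeeping. The delicate issue is the behavior of $\psi$ on the common boundaries $\partial V$, where one must verify that the null set on which $\underline H$ is only assumed finite is indeed negligible for the conclusion, which is where the bounded-shape/bounded-modulus hypothesis on both $V$ and $\varphi(V)$ is essential.
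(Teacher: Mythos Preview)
The paper does not prove this lemma; it attributes the result to Kozlovski--Shen--van Strien and simply cites \cite{KSS}. There is therefore no in-paper argument to compare your proposal against.

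That said, your Step~1 has a real gap. The hypotheses---$M$-bounded shape of $V$ and $\varphi(V)$, together with the lower bounds on $\mathrm{mod}(W\setminus\overline V)$ and $\mathrm{mod}(\widetilde W\setminus\overline{\varphi(V)})$---do \emph{not} imply that the Riemann maps $\Phi:V\to\mathbb D$ and $\widetilde\Phi:\varphi(V)\to\mathbb D$ extend to uniformly quasisymmetric boundary homeomorphisms. Bounded shape only places $V$ between two concentric round disks of comparable radii; the Jordan curve $\partial V$ may still be arbitrarily irregular inside that annulus (a round disk with a short thin inward slit already has shape close to~$1$), and then the boundary extension of the Riemann map is far from quasisymmetric. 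The lemmas you invoke, Lemma~\ref{2:distortionlemma} and Lemma~\ref{4:distortionlemma}, control shape under branched covers and under change of basepoint respectively; neither says anything about boundary regularity of Riemann maps. Worse, even if $\Phi$ and $\widetilde\Phi$ were quasisymmetric on the boundary, the circle map $\widetilde\Phi\circ\varphi\circ\Phi^{-1}$ involves $\varphi|_{\partial V}$, about which the hypotheses give no quantitative information: a pointwise bound $\underline H(\varphi,x)<H$ at each $x\in\partial V$ (even where it applies) does not yield quasisymmetry of the restriction of $\varphi$ to that curve.

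Since the gluing in Step~2 depends on having $\psi_V=\varphi$ on each $\partial V$, the construction collapses once Step~1 is removed. The argument in \cite{KSS} is organized differently and does not attempt to extract a quasisymmetric circle map from $\varphi|_{\partial V}$; you should consult that reference for the actual mechanism rather than rely on the extension scheme you sketch here.
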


\begin{proof}[Proof of Theorem \ref{th:conjugate}]
	Let $f$ and $\wt{f}$ be two non-renormalizable Newton maps conjugated by a homeomorphism $h$. It is clear that $h(\infty)=\infty$, since $\infty$ is the unique repelling fixed point of $f$ and $\wt{f}$. The Fatou sets coincide with the attracting basins of roots, i.e., $\mc{F}_f=B_f$ and $\mc{F}_{\wt{f}}=B_{\wt{f}}$. Then there is a conjugacy which is $K_0$-quasiconformal on $\mc{F}_f$ and coincides with $h$ on $\mc{J}_f$; see \cite{McS}. We denote it still by $h$. Thus there is a constant $H$ such that
	\begin{equation}\label{eq:fatou}
	\underline{H}(h, x)<H\tu{ for each }x\in \mb{C}\setminus\mc{J}_f.
	\end{equation}
	
	Our goal is to prove that the map $h$ is actually global quasiconformal in the complex plane $\mb{C}$.
	
Recall that $X_f$ and $X_{\wt{f}}$ are the defined sets associated to $f$ and $\wt{f}$ in the end of Section \ref{sec:wandering}. Recall also that $\mathcal{P}_n$ is the collection of all puzzle pieces of depth $n$ associated to $f$. Since $h$ is a conjugacy, $h(X_f)=X_{\wt{f}}$ and $\widetilde{\mathcal{P}}_n:=\{\wt{P}:=h(P):P\in \mathcal{P}_n\}$ is the collection of all puzzle pieces of depth $n$ for $\widetilde{f}$. 
	\vskip 0.3cm
	\textbf{Claim } The following statements hold:
	\begin{enumerate}
		\item $\tu{mes}(\mc{J}_f\setminus X_f)=\tu{mes}(\mc{J}_{\wt{f}}\setminus X_{\wt{f}})=0$;
		\item $\underline{H}(h, x)<\infty$ for all $x\in \mc{J}_f\setminus X_f$.
	\end{enumerate}
\begin{proof}[Proof of the claim]
	Statement (1) follows directly by Lemma  \ref{lem:density}.
	
	For (2), assume $x\in \mc{J}_f\setminus X_f$ is wandering. The argument goes similarly in the case that $x$ is preperiodic. By Corollary \ref{cor:mes} there is an elevator $(U, V, n_k)$ for the point $x$. Then we obtain proper mappings 
	$$f^{n_k}: (U_{k}, V_{k})\to (U, V)$$ with $f^{n_k}(x)\in  U\Subset V$ and with their degrees bounded by a constant $\delta=\delta(x)$. Here $U_{k}$ and $V_{k}$ are the components of $f^{-n_k}(U)$ and $f^{-n_k}(V)$ containing the point $x$, respectively. Since $h$ is a conjugacy, by definition $(\wt{U},\wt{V}, n_k)$ is an elevator for the point $\wt{x}:=h(x)$.

	To show $\underline{H}(h, x)<\infty$, it suffices to find a round disk $D_k:=D(x, r_{k})\Subset U_{k}$ for each $k$ with $\wt{D}_{k}:=h(D(x, r_{k}))$ such that $\tu{Shape}(\wt{D}_{k}, \wt{x})\leq M$ for some constant $M=M(x)$ independent on $n$. This is because the diameters of $U_{k}$ tend to zero as $k\to\infty$.
	
	Since the set $S_k:=f^{-n_k}(f^{n_k}(x))\cap V_{k}$ contains at most $\delta$ elements, there is a round disk $D(x, r_{k})\Subset U_{k}$ and a constant $\epsilon_0$ depending only on $\delta$ and $\tu{mod}(V\setminus\ol{U})$, such that
	$$\tu{inf}\,\{\tu{dist}_{\rho_{k}}(\xi, y): \xi\in S_k, y\in\partial D(x, r_k)\}\geq \epsilon_0,$$
	where $\rho_{k}$ is the hyperbolic metric of $V_{k}$. Then by \cite[Lemma 2.2]{CJY} or \cite[Appendix C]{ST}, there is a constant $\epsilon_1$ depending only on $\epsilon_0$ and $\delta$ such that 
	$$\tu{inf}\,\{\tu{dist}_{\rho}(f^{n_k}(x), f^{n_k}(y)): y\in\partial D(x, r_k)\}\geq \epsilon_1,$$
	where $\rho$ is the hyperbolic metric of $V$. Then in sense of the Euclidean metric we have
	$$1/L\leq |f^{n_k}(y)-f^{n_k}(x)|\leq L\ \ \  \forall y\in\partial D(x, r_k)$$
	with some constant $L>1$ independent on $k$. By the continuity of $h$, there is also a constant $\wt{L}>1$ such that
	$$1/\wt{L}\leq |h(f^{n_k}(y))-h(f^{n_k}(x))|\leq \wt{L}\ \ \  \forall y\in\partial D(x, r_k).$$
	Note that $h\circ f^{n_k}(\partial D(x, r_k))=\wt{f}^{n_k}(\partial \wt{D}_{k})$ and it may not be a Jordan curve. Let $B_{k}:=D(\wt{f}^{n_k}(\wt{x}), 1/\wt{L})$. Then $B_{k}\subset \wt{f}^{n_k}(\wt{D}_k)$ and $B_k\cap\wt{f}^{n_k}(\partial \wt{D}_k)=\es$ by the above inequality. 
	
	Consider the map: $\wt{f}^{n_k}:(\wt{U}_{k}, \wt{V}_{k})\to (\wt{U}, \wt{V})$. Let $B_{-k}$ be the component of $\wt{f}^{-n_k}(B_{k})$ containing the point $\wt{x}$. Then $B_{-k}\subseteq \wt{U}_{k}$. Hence the Jordan curve $\partial \wt{D}_{k}$ is essentially contained in the annulus $\wt{U}_{k}\setminus\ol{B_{-k}}$. By Lemma \ref{2:distortionlemma}, there is a constant $C$ independent on $k$ such that $$\tu{Shape}(B_{-k}, \wt{x})\leq C$$ and 
	$$\tu{Shape}(\wt{D}_{k}, \wt{x})\leq 2C \frac{\tu{diam}(\wt{U}_{k})}{\tu{diam}(B_{-k})}\leq 2C^2\frac{\tu{diam}(U)}{\tu{diam}(B_{k})}\leq C^2\cdot \wt{L}\cdot \tu{diam}(U)=:M.$$
	Thus $\underline{H}(h, x)<\infty$. The proof of the claim is complete.	
\end{proof}
	
	Now we fix an integer $n\geq 0$ and a puzzle piece $P_n\in\mc{P}_n$. Consider the homeomorphism $h: P_n\to \wt{P}_n$ with $\wt{P}_n=h(P_n)\in\wt{\mc{P}}_n$. In what follows, we shall check that all the conditions in Lemma \ref{l:qc-criterion} are satisfied for the settings $\varphi:=h|_{P_n}, W:=P_n, \wt{W}:=\wt{P}_n, X_0:=(\mc{J}_f\sm X_f)\cap P_n$ and $X_1:=X_f\cap P_n$.
	
	To see this, first conditions (1) and (2) in Lemma \ref{l:qc-criterion} hold by \eqref{eq:fatou} and the claim. For each $x\in X_1$, take puzzle pieces $(V_{n(x)}, V_{n(x)}^+)$ in Lemma \ref{lem:shape}\,(2) with $n(x)$ sufficiently large such that $V_{n(x)}^+\subset P_n$. Then by Lemma \ref{lem:shape}, we have 
	\begin{equation}\label{eq:moduli}
	\tu{mod}(P_n\setminus\ol{V_{n(x)}})\geq \tu{mod}(V_{n(x)}^+\sm\ol{V_{n(x)}})\geq M_1
	\end{equation}
	and 
	\begin{equation}\label{eq:sh}
	\tu{Shape}(V_{n(x)},x)\leq M_2.
	\end{equation}
	Let $\widetilde{V}_{n(x)}:=h(V_{n(x)})$ and $\widetilde{V}^+_{n(x)}:=h(V_{n(x)}^+)$ be puzzle pieces in the dynamical plane of $\wt{f}$. Since $h$ is a conjugacy, these two puzzle pieces carry the same combinatorial information as $V_{n(x)}$ and $V_{n(x)}^+$. Thus \eqref{eq:moduli} and \eqref{eq:sh} also hold for $\wt{P}_n$, $\wt{V}_{n(x)}$ and $\wt{V}^+_{n(x)}$. 
	
	Among the family 
	$\{V_{n(x)},x\in X_1\}$, let $\mathcal{V}_n$ be the collection of all maximal elements $\{V_1,\cdots,V_i,\cdots\}$ in the sense that 
	$$V_{n(x)}(x)\cap V_i\neq \emptyset\Rightarrow V_{n(x)}(x)\subseteq V_i.$$
	Clearly $\mathcal{V}_n$ consists of pairwise disjoint topological disks which form a covering of $X_1$. Thus condition (3) in Lemma \ref{l:qc-criterion} is satisfied.
	
By applying Lemma \ref{l:qc-criterion} for each $P_n\in\mc{P}_n$ and for each integer $n\geq 0$, we obtain a sequence of $K$-quasiconformal mapping $h_n$ such that $h_n$ and $h$ coincide in $\mb{C}\setminus\bigcup_{P\in\mc{P}_n}P$. Take a subsequence of $h_n$ which converges locally uniformly to a $K$-quasiconformal mapping $\wt{h}$. Since $\bigcap P_n(z)=\{z\}$ for all $z\in\mc{J}_f\setminus\Omega_f$ by Proposition \ref{prop:shrinking}, Lemmas \ref{lem:density} and \ref{lem:shape}, it follows that $\wt{h}=h$. Thus the original mapping $h$ is $K$-quasiconformal. The proof is complete. 
\end{proof}

\end{document}